\theoremstyle{plain}
\newtheorem{theorem}{Theorem}[section]
\newtheorem{corollary}[theorem]{Corollary}
\newtheorem{lemma}[theorem]{Lemma}
\theoremstyle{definition}
\newtheorem{definition}[theorem]{Definition}
\numberwithin{equation}{section}
\newtheorem*{theorem*}{Theorem} 
\newcommand{\Z}{{\mathbb Z}}
\newcommand{\R}{{\mathbb R}}
\newcommand{\N}{{\mathbb N}}
\DeclareMathOperator{\dist}{dist}
\DeclareMathOperator{\Mod}{mod}
\title[Paraproducts for convex sets]{Paraproducts for bilinear multipliers associated with convex sets}
\author{Olli Saari}
\author{Christoph Thiele}
\address{Mathematical Institute, University of Bonn, Endenicher Allee 60, 53115 Bonn, Germany}
\email{saari@math.uni-bonn.de, thiele@math.uni-bonn.de}
\begin{document}

\begin{abstract}
	We prove bounds in the local $ L^2 $ range for exotic paraproducts
	motivated by  bilinear multipliers associated with convex sets.
	One result assumes an exponential boundary curve. Another one
        assumes a higher order lacunarity condition.
	\end{abstract}

\maketitle


\section{Introduction}
Given a bounded measurable function $ m $ in the plane,
we define the associated bilinear Fourier multiplier operator acting on a pair of one dimensional Schwartz functions $(f,g)$ by
\begin{equation}\label{bilmult}
B_m(f,g) (x) := \iint_{\R^2} m(\xi,\eta) \widehat{f}(\xi) \widehat{g}(\eta) e^{2 \pi i (\xi + \eta )x} \, d \xi d \eta. 
\end{equation}
The constant multiplier $m = 1$ reproduces the pointwise product that maps $(f,g)$ to $fg$, 
and it is a fundamental question to determine whether and for which $ m $ H\"older's inequality,
true for the pointwise product,
extends to $B_m$.  
We focus on bounds
\begin{equation}\label{holder}\|B_m(f,g)\|_{p_3'}\le C_{p_1,p_2,m} \|f\|_{p_1} \|g\|_{p_2}
\end{equation}
in the open local $L^2$ region
\begin{equation}\label{locall2}
2< p_1,p_2,p_3<\infty, \quad \frac{1}{p_1}  + \frac{1}{p_2} + \frac{1}{p_3} = 1,
\end{equation}
where $p_3' = p_3 /( p_3 -1)$ denotes the dual exponent.

A well-understood example is the classical linear Mikhlin multiplier $m$ in two dimensions. 
It leads to the theory of paraproducts, 
for which we refer to the textbooks \cite{MR1456993,MR3052498} and the original references therein. 
Beyond Mikhlin multipliers, 
the bilinear Hilbert transforms are the most prominent examples.
Their essence is captured by choosing $ m $ to be the characteristic function of a half-plane. The bounds \eqref{holder} and \eqref{locall2} for this case were established in \cite{MR1425870,MR1491450}.
They also hold with a constant independent of the slope and location of the associated half-plane \cite{MR2113017,MR1933076},
and the range of exponents can be extended beyond the local $ L^2 $ range as in \cite{MR3453362,MR1689336,MR1619285}.

Our focus here is on multipliers $m$ which are characteristic functions of convex sets rather than just half-planes as in the case of the bilinear Hilbert transforms. 
Certain curved regions to be discussed shortly,
most notably the epigraph of a parabola, 
were considered in \cite{MR2701349}, 
and the bilinear disc multiplier has been studied in \cite{MR2197068}.
The bounds \eqref{holder} and \eqref{locall2} are known for the disc. 
The results in \cite{MR2197068} also imply the previously known uniform local $L^2$ bounds for the half-plane multipliers
as can be seen by using the invariance of the bounds under dilation and translation of the convex set and approximating half-planes by large discs. 
In \cite{MR3000982}, a lacunary polygon is discussed.

A further list of examples of convex sets
illuminating the increasingly complicated structure can be given as follows.
We say a line has degenerate direction
if it is orthogonal to one of the three vectors $(0,1)$, $(1,0)$ and $(1,1)$. 
A line through a boundary
point of a convex set which does not intersect the interior of the convex set
is a tangent.
We list:

\begin{enumerate}
\item A convex polygon.
\item A convex set of the form $\{(\xi,\eta): 0\le \xi\le 1,\ \gamma(\xi) \le \eta \}$ for some
convex function $\gamma:[0,1]\to [0,1]$ such that all tangent lines
at points $(\xi, \gamma(\xi))$ with $0<\xi< 1$ have slope between $a$ and $2a$ for some $0<a<1$. 
\item A bounded convex set with  $C^2$ boundary curve such that the curvature of the boundary is nonzero at every boundary point with a degenerate tangent line.
\item  A bounded convex set $C$ such that every boundary point with a degenerate tangent line has tangent lines for every direction in a neighborhood of this degenerate direction.
\item The convex set $\{(\xi,\eta): \xi\le 0,\ 2^\xi \le \eta \}$.
\item The convex set $\{(\xi,\eta): 0\le \xi \le 1,\  \gamma (\xi)\le \eta \}$, where $\gamma$ is a monotone increasing convex function mapping $[0,1]$ to $[0,2^{-8}]$ such that the set $\{\gamma(2^{-j}), j\in \N\}$ is multi-lacunary.
See Definition \ref{def:fourierlac} below for the definition of multi-lacunarity.
\item A general bounded convex set $C$.
\end{enumerate}
The first four examples are known to satisfy bounds \eqref{holder} and \eqref{locall2}. 

The first example is easily reduced to the bilinear Hilbert transforms. 
The reduction works by iteratively cutting the multiplier by a line parallel to a degenerate direction, which results in two new multipliers, 
whose operators can be expressed through the original
bilinear multiplier operator and pre- or post-composition with a linear multiplier such as the Riesz projection. 

The second example is a perturbation of the bilinear Hilbert transforms. 
Thanks to the comparable upper and lower bounds on the slope, 
the methods for the bilinear Hilbert transform can be adapted to this situation \cite{MR2701349,MR2197068}.
The third example includes the case of the parabola and the disc \cite{MR2701349,MR2197068}.
It requires an additional technique to put together infinitely many pieces as in the second example 
as well as an additional bound for a central piece near each degenerate direction.
These additional pieces are what we call exotic paraproducts.
The techniques of \cite{MR2701349,MR2197068} still apply at this level of generality.
Likewise, the fourth example can be handled with similar methods. 
Such multipliers are more general away from the critical directions
but have strongly regulated behavior at the degenerate directions. 

In this paper, we study relaxation of the additional conditions near the degenerate directions.
Beginning with the proof of the first bounds for the bilinear Hilbert transform, 
the common approach to understand bilinear multipliers associated with characteristic functions has been to decompose them into smoother multipliers, paraproducts,
which are singular only at a single boundary point instead of a one-dimensional set.
As this boundary point comes closer to a degenerate tangent direction, 
the relevant paraproducts undergo a deformation.  
At a boundary point with degenerate tangent direction,
one encounters an entirely exotic paraproduct, 
whose structure is closely tied to the behaviour of the boundary in the vicinity of that point. 
In the present paper, in particular due to the local $L^2$ range, 
it seems prudent to consider rougher exotic paraproducts which correspond to characteristic functions of certain staircase sets.

One of the results in the present paper provides bounds for the exotic paraproduct
associated with case five of the previous list.
\begin{theorem}\label{exppara}
 Let $p_1,p_2,p_3$ be as in \eqref{locall2}.
 Define 
 \[m(\xi,\eta):=\sum_{j\in \N} 1_{[-(j+1), -j)}(\xi)1_{[2^{-j}, 1)} (\eta).\]
 Then the operator  \eqref{bilmult}  satisfies the a priori estimate \eqref{holder}.
 \end{theorem}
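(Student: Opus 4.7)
The plan is to exploit the staircase structure of $m$ by decomposing $B_m(f,g)$ as a sum of pieces with near-disjoint unit-length Fourier supports, then to combine Rubio de Francia's square-function theorem (in the $f$-variable) with a Carleson--Hunt maximal estimate (in the $g$-variable). For $j\ge 1$ define the sharp frequency projections
\[\alpha_j f := \bigl(1_{[-(j+1),-j)}\widehat f\bigr)^{\vee},\qquad \widetilde H_j g := \bigl(1_{[2^{-j},1)}\widehat g\bigr)^{\vee},\]
so that expanding the symbol gives $B_m(f,g)=\sum_{j\ge 1} A_j$ with $A_j := (\alpha_j f)(\widetilde H_j g)$. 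Since the Fourier transform of a product is the convolution of Fourier transforms, $A_j$ has Fourier support in $I_j:=[-(j+1)+2^{-j},-j+1)$, an interval of length at most $2$. One checks directly that $I_j\cap I_{j+2}=\emptyset$, so splitting $\{A_j\}$ into the subcollections indexed by even and by odd $j$ produces two families with pairwise disjoint Fourier supports.

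The dual form of Rubio de Francia's square-function theorem, valid in our range because $p_3'\in (1,2]$, applied to each subcollection and combined via the triangle inequality yields
\[\|B_m(f,g)\|_{L^{p_3'}}\lesssim \Bigl\|\bigl(\sum_{j\ge 1} |A_j|^2\bigr)^{1/2}\Bigr\|_{L^{p_3'}}.\]
Pointwise $|A_j|\le |\alpha_j f|\cdot\sup_\ell|\widetilde H_\ell g|$, so H\"older's inequality with $1/p_1+1/p_2=1/p_3'$ bounds the right-hand side by
\[\Bigl\|\bigl(\sum_{j\ge 1} |\alpha_j f|^2\bigr)^{1/2}\Bigr\|_{L^{p_1}} \cdot \bigl\|\sup_j|\widetilde H_j g|\bigr\|_{L^{p_2}}.\]
The first factor is $\lesssim\|f\|_{L^{p_1}}$ by Rubio de Francia's original theorem, applied to the pairwise disjoint unit intervals $[-(j+1),-j)$; this is valid since $p_1\ge 2$. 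For the second, the identity $\widetilde H_j g = P_{(-\infty,1)}g-P_{(-\infty,2^{-j})}g$ and the modulation identity $|P_{(-\infty,a)}g(x)|=|P_{(-\infty,0)}(e^{-2\pi i a\cdot}g)(x)|$ dominate the maximal function by Carleson's operator, which is bounded on $L^{p_2}$ for $1<p_2<\infty$ by the Carleson--Hunt theorem.

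The principal insight, and the place where the exponential boundary curve asserts itself, is that the staircase has \emph{unit-length} horizontal steps in $\xi$, so the natural orthogonality on the $f$-side is arithmetic rather than dyadic; this forces the use of Rubio de Francia in place of a standard Littlewood--Paley square function. The main hard ingredient is the Carleson--Hunt maximal bound, which appears unavoidable because the cutoffs $\widetilde H_j$ are sharp and their supremum is exactly a partial-sum (Carleson) operator acting on $g$; were one content to smooth the staircase, this could be reduced to a Hardy--Littlewood maximal estimate.
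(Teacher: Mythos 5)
Your argument is correct, but it is a genuinely different route from the one in the paper. You keep the original grouping $B_m(f,g)=\sum_j A_j$ with $A_j=(M_{[-(j+1),-j)}f)(M_{[2^{-j},1)}g)$, observe that $\widehat{A_j}$ is supported in $I_j=[-(j+1)+2^{-j},\,-j+1)$ and that the $I_j$ with $j$ of a fixed parity are pairwise disjoint, invoke the dual form of Rubio de Francia (valid since $p_3'<2$) to pass to the square function $\bigl(\sum_j|A_j|^2\bigr)^{1/2}$, and then split by H\"older into a Rubio de Francia square function in $f$ and a Carleson maximal partial-sum operator in $g$. Each step checks out, including the closure-level disjointness of the $I_j$ within a parity class. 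The paper instead regroups the symbol as $\sum_j 1_{(-\infty,-j-1)}(\xi)\,1_{[2^{-j-1},2^{-j})}(\eta)$, proves a smooth model bound (Lemma \ref{expplemma}) by localizing in space, invoking the Coifman--Rubio de Francia--Semmes $V^r$-multiplier theorem (Theorem \ref{crs}) together with L\'epingle's variational inequality for smooth averages, and then compares the rough multiplier to the smooth model with Rubio de Francia and Fefferman--Stein square-function estimates. Your proof is shorter and more transparent, but the price is that the maximal bound $\|\sup_j|M_{[2^{-j},1)}g|\|_{p_2}\lesssim\|g\|_{p_2}$ is obtained by appealing to the Carleson--Hunt theorem, which the paper is careful to avoid; the authors derive the needed control from the weaker L\'epingle/variational input, consistent with the machinery they develop and reuse elsewhere. (One could also replace your Carleson step by the paper's own variational route: the $V^r$ bound for $r>2$ together with Rubio de Francia controls the sharp lacunary maximal partial sum, so the two proofs are closer than they first appear.) Your closing remark that smoothing the staircase would reduce the maximal bound to Hardy--Littlewood is accurate for your decomposition, and it is essentially the reason the paper works with a smooth model at the intermediate stage.
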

Here we are also able to complete the passage from a paraproduct estimate to a multiplier bound as in \cite{MR2701349,MR2197068} and reduce the bounds for case five to bounds for case two. 
\begin{corollary}\label{expcor}
  Let $p_1,p_2,p_3$ be as in \eqref{locall2}.
 Let $C$ be the convex set
 \[\{ (\xi,\eta):  \xi \le 0,\ 2^{\xi} \le \eta <1 \} .\]
 Then  $B_{m}$  as in \eqref{bilmult} with $m=1_C$
 satisfies the a priori bounds  \eqref{holder}.
 \end{corollary}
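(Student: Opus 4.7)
The plan is to split $1_C$ into the staircase multiplier $m$ from Theorem \ref{exppara} together with a sum of pieces each of which falls within case (2) from the introduction, and then to combine the bounds via Fourier orthogonality. For $j\in\N$, set
\[ D_j := \{(\xi,\eta) : -(j+1)\le \xi < -j,\ 2^\xi\le \eta < 2^{-j}\}, \]
so that $1_C = m + \sum_{j\in\N} 1_{D_j}$ pointwise (for $j=0$ the staircase summand is empty). Each $D_j$ is convex, sits inside $[-(j+1),-j)\times [2^{-(j+1)},2^{-j})$, and is bounded above by an arc of $\eta=2^\xi$ whose slope lies in $[\ln 2\cdot 2^{-(j+1)}, \ln 2\cdot 2^{-j}]$. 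The affine change of variables $(\xi,\eta)\mapsto (\xi+j+1,\ 2^{j+1}\eta-1)$ sends $D_j$ onto the single fixed convex set $\{0\le \xi'<1,\ 2^{\xi'}-1\le \eta'<1\}$, whose boundary function has slope in $[\ln 2, 2\ln 2]$; this is an instance of case (2) with $a=\ln 2$. By the standard translation and dilation invariance of the bilinear multiplier norm in the Hölder scale \eqref{locall2}, the case (2) bounds from \cite{MR2701349,MR2197068} give $\|B_{1_{D_j}}\|_{L^{p_1}\times L^{p_2}\to L^{p_3'}}\le C$ uniformly in $j$.

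The key structural observation is that the family $\{D_j\}$ has disjoint Fourier projections in each coordinate: to the unit intervals $I_j:=[-(j+1),-j)$ in $\xi$, to the lacunary dyadic intervals $J_j:=[2^{-(j+1)},2^{-j})$ in $\eta$, and to pairwise disjoint intervals $K_j\subseteq[-(j+1),-j+1)$ in $\xi+\eta$. Writing $f_j,g_j$ for the Fourier restrictions of $f,g$ to $I_j,J_j$, we have the identity $h_j:=B_{1_{D_j}}(f,g)=B_{1_{D_j}}(f_j,g_j)$, and $h_j$ has Fourier support in $K_j$. Since $p_3'>1$, the dual of Rubio de Francia's square function inequality applies to the disjoint unit-interval family $\{K_j\}$ and yields
\[ \Big\|\sum_{j\in\N} h_j\Big\|_{p_3'} \lesssim \Big\|\Big(\sum_j |h_j|^2\Big)^{1/2}\Big\|_{p_3'}. \]

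Bounding the right-hand side calls for an $\ell^2$-valued upgrade of the uniform case (2) estimate,
\[ \Big\|\Big(\sum_j |B_{1_{D_j}}(f_j,g_j)|^2\Big)^{1/2}\Big\|_{p_3'} \lesssim \Big\|\Big(\sum_j |f_j|^2\Big)^{1/2}\Big\|_{p_1} \Big\|\Big(\sum_j |g_j|^2\Big)^{1/2}\Big\|_{p_2}. \]
Rubio de Francia for the unit-interval family $\{I_j\}$, valid since $p_1>2$, together with classical lacunary Littlewood-Paley for the dyadic family $\{J_j\}$, then controls the right-hand side by $\|f\|_{p_1}\|g\|_{p_2}$; combined with the staircase estimate from Theorem \ref{exppara} applied to $B_m$, this completes the corollary. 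The principal obstacle is the $\ell^2$-valued estimate just displayed: it is not a formal consequence of the scalar case (2) bound and must be extracted from the tile-based time-frequency proof in \cite{MR2701349,MR2197068}, either by running the argument on $\ell^2$-valued functions directly or by an abstract lifting. A smaller technical point is confirming the uniformity in $j$ of the case (2) constants under the rescaling of $D_j$; this is precisely the translation-dilation symmetry of bilinear multipliers along the Hölder line \eqref{locall2}, already exploited in \cite{MR2197068} to pass from the half-plane to the disc.
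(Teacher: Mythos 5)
Your decomposition $1_C = m + \sum_j 1_{D_j}$ is exactly the one in the paper, and the treatment of the staircase part $m$ by Theorem \ref{exppara} matches. Your identification of each $D_j$ as a case (2) multiplier, uniformly bounded after affine rescaling, and your observation that the $D_j$ enjoy triple frequency disjointness in $\xi$, $\eta$ and $\xi+\eta$ are also shared with the paper. The divergence happens at the step where you assemble the pieces, and there your proposal has a genuine gap that you yourself flag: you route through the square-function duality
\[
\Big\|\sum_{j} B_{1_{D_j}}(f_j,g_j)\Big\|_{p_3'}
\lesssim \Big\|\Big(\sum_j |B_{1_{D_j}}(f_j,g_j)|^2\Big)^{1/2}\Big\|_{p_3'},
\]
and then need an $\ell^2$-valued upgrade of the case (2) bound for the family $\{D_j\}$. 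That vector-valued estimate is indeed not a formal corollary of the scalar bound, and no proof of it is supplied; as it stands, the argument is incomplete at precisely the step that carries all the weight.

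The paper sidesteps this entirely by a small but decisive change in the order of operations. After dualizing against $h$ and restricting $f$, $g$, $h$ to their relevant frequency intervals, it applies the \emph{scalar} case (2) bound term by term to obtain
\[
\sum_{j}\big\|M_{[-(j+1),-j)}f\big\|_{p_1}\big\|M_{[2^{-(j+1)},2^{-j})}g\big\|_{p_2}\big\|M_{[j,j+1)+[-2^{-j},-2^{-(j+1)})}h\big\|_{p_3},
\]
and then uses H\"older's inequality \emph{on the sum over $j$ with exponents $p_1,p_2,p_3$} (not $2$). This produces mixed $L^{p_i}(\ell^{p_i})$ quantities, and because the range \eqref{locall2} forces $p_1,p_2,p_3>2$, each sequence norm $\ell^{p_i}$ is dominated pointwise by $\ell^2$, after which Theorem \ref{rdf} (Rubio de Francia) bounds each factor by $\|f\|_{p_1}$, $\|g\|_{p_2}$, $\|h\|_{p_3}$ respectively. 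The embedding $\ell^{p_i}\hookrightarrow\ell^2$ is exactly what makes the local $L^2$ restriction pay off here, and it means only the scalar case (2) estimate is needed. If you replace your dual square-function step with this three-fold H\"older on the index $j$, the rest of your argument goes through and the gap disappears.
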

The particular cut-offs at $0$ and $1$ are not important in this theorem. 
One also obtains bounds for similar convex sets with constraints of the type $ a^{\xi}\le \eta$ with $ a \ne 2 $
by applying translation and isotropic dilation to the multiplier. 
Hence the number $ 2 $ has no fundamental importance in the corollary.

Our second result proves bounds for exotic paraproducts related to the sixth case of our list.
\begin{definition}[Multi-lacunarity]
	\label{def:fourierlac}
Let $b$ be a non-negative integer.
We call a finite set $X$ of real numbers $(0,b)$-lacunary, if it consists of a single element.

Let $d$ be a non-negative integer and assume we have already defined 
$(d,b)$-lacunarity. We call a finite set $X$ of real numbers $(d+1,b)$-lacunary,
if it can be partitioned into  two sets $L$ and $O$ such that $L$ is $(d,b)$-lacunary
and for any pair of different points $\xi,\xi'$ in $O$ we have 
\[ \dist (\xi,\xi') \ge 2^{-b} \dist(\xi, L).\] 
 \end{definition}

\begin{theorem}\label{mlt}
Let $b,d\ge 2$ be integers and let $p_1,p_2,p_3$ be as in \eqref{locall2}.
  Assume that we have sequences \[(\eta_j)_{j\in \N},\ (\zeta_j)_{j\in \N},\ (\xi_j)_{j\in \N}\] 
  such that for all $j$
  \begin{align}
  \label{etaspacing}
    2^{ -j}  \le  \eta_j \le \zeta_j< 2^{2-j}, \\
  \label{xispacing}
  0\le \xi_j+2^{6-j}\le \xi_{j-1}.
  \end{align}
  Assume the image $X$ of the sequence $(\xi_j)$ be $(d,b)$-lacunary.
  Then the multiplier operator $B_m$ as in  \eqref{bilmult} with
  \begin{equation}\label{expara}
  m(\xi,\eta)=\sum_j 1_{(0,\xi_j)}(\xi) 1_{(\eta_j, \zeta_j)}(\eta)
  \end{equation}
  satisfies the estimate \eqref{holder} with constant 
  \[C_{p_1,p_2,m}=C_{p_1,p_2,b,d}\]
  that depends on $m$ only through $b$ and $d$.
\end{theorem}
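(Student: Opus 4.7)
The plan is to induct on the depth $d$ of the multi-lacunarity structure.

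For the base case $d=0$, the set $X=\{\xi_\ast\}$ is a singleton, so $\xi_j\equiv\xi_\ast$ and the multiplier factorises as $1_{(0,\xi_\ast)}(\xi)\cdot\sum_j 1_{(\eta_j,\zeta_j)}(\eta)$, giving
\[
B_m(f,g)=(P_{(0,\xi_\ast)}f)\cdot\sum_j g_j,\qquad g_j:=P_{(\eta_j,\zeta_j)}g.
\]
The first factor is a modulated truncated Hilbert projection, bounded on $L^{p_1}$ uniformly in $\xi_\ast$; the second is a Fourier projection onto a union of disjoint intervals sitting inside the lacunary dyadic annuli $[2^{-j},2^{2-j})$, bounded on $L^{p_2}$ by the Rubio--de~Francia/Littlewood--Paley square function. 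Hölder's inequality then closes the base case.

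For the inductive step, I would partition $X=L\sqcup O$ as in Definition~\ref{def:fourierlac}, with $L$ being $(d-1,b)$-lacunary. For each $j$, let $\xi_j^\flat\in L$ be a point closest to $\xi_j$, and let $J_j$ denote the interval between $\xi_j^\flat$ and $\xi_j$. The identity
\[
1_{(0,\xi_j)}=1_{(0,\xi_j^\flat)}+\varepsilon_j 1_{J_j},\qquad \varepsilon_j:=\operatorname{sgn}(\xi_j-\xi_j^\flat),
\]
splits $B_m=B^L+B^O$. For $B^L$, all $\xi$-cutoffs lie in $L$; since $(\xi_j)$ is strictly decreasing, the nearest-point map $j\mapsto\xi_j^\flat$ is piecewise constant on consecutive blocks of $j$, so after regrouping such blocks one invokes the inductive hypothesis at depth $d-1$.

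For $B^O$ only $j\in J_O$ contributes, and by the lacunarity of $O$ relative to $L$ the children $\{\xi_j\in O:\xi_j^\flat=\xi^\flat\}$ of each parent $\xi^\flat\in L$ form, on either side of $\xi^\flat$, a classical lacunary sequence with ratio at most $1-2^{-b}$. Telescoping gives $P_{J_j}f=\sum F_{k'}$ with disjoint lacunary Littlewood--Paley pieces $F_{k'}$ of $f$, and exchanging the order of summation produces
\[
B^O(f,g)=\sum_{k'}F_{k'}\,G_{\ge k'},
\]
where $G_{\ge k'}$ is a tail of the sequence of LP pieces $g_j$. The spacing \eqref{xispacing} forces $\xi_j\ge 64\cdot 2^{-j}$, and a short computation then shows the Fourier supports of $F_{k'}G_{\ge k'}$ overlap a number of times bounded in $b$. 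Littlewood--Paley in the output, Rubio--de~Francia for $(F_{k'})$ on $L^{p_1}$ (valid for $p_1\ge 2$), the Carleson-type maximal bound $\|\sup_{k'}|G_{\ge k'}|\|_{L^{p_2}}\lesssim\|g\|_{L^{p_2}}$, and Hölder then combine to yield
\[
\|B^O(f,g)\|_{L^{p_3'}}\lesssim_b\|f\|_{p_1}\|g\|_{p_2}.
\]

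The main obstacle is making the reduction of $B^L$ to the inductive hypothesis rigorous: after merging blocks of $j$ with a common $\xi_j^\flat$, the effective $g$-side Fourier support at each $\xi^\flat$ becomes a union of intervals from several dyadic $\eta$-scales, violating the single-interval hypothesis \eqref{etaspacing}. One clean fix is to strengthen the inductive statement so as to allow LP-lacunary unions on the $g$-side; another is to bypass recursion entirely, telescoping in one shot along the full multi-lacunary tree path $\xi_j^{(0)},\dots,\xi_j^{(d)}=\xi_j$ to obtain $B_m=\sum_{k=0}^d P^{(k)}$, bounding each level-$k$ paraproduct $P^{(k)}$ by the Rubio--de~Francia/Littlewood--Paley argument above, and summing the $d+1$ bounds to obtain the stated dependence of $C_{p_1,p_2,b,d}$ on $b$ and $d$.
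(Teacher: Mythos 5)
Your proposal takes a genuinely different route from the paper, which avoids induction on $d$ entirely. Instead of recursion, the paper sets up a dyadic Whitney decomposition of the $\xi$-axis relative to the multi-lacunary set $X$: it partitions dyadic intervals into a family $Z$ of intervals well separated from $X$ and a family $Y$ of intervals near $X$, treats the $Z$-part by Rubio de Francia plus a Lipschitz-multiplier argument, and then splits $Y$ into the finitely many levels $Y^{(0)},\dots,Y^{(d)}$ of the multi-lacunary hierarchy, handling each level with a square-function and Fefferman--Stein argument. The price paid is a careful bookkeeping of which dyadic intervals sit near which level of $X$; the benefit is that the $\eta$-side never changes scale, so the dyadic structure \eqref{etaspacing} is never threatened.

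Your inductive approach, if it worked, would be conceptually cleaner. But as you yourself flag, the $B^L$ term does \emph{not} reduce to the inductive hypothesis: once several indices $j$ share the same parent $\xi_j^\flat$, condition \eqref{xispacing} fails for the sequence $(\xi_j^\flat)$, and if you instead merge those $j$'s into one block, the $\eta$-support at that block spans several dyadic annuli and \eqref{etaspacing} fails. Neither of the two remedies you mention is carried out. Strengthening the inductive statement to allow lacunary unions of $\eta$-intervals is a real change of hypotheses and the base case would need to be redone; the ``one-shot telescoping along the tree path'' is essentially what the paper does, but in that version the increments $1_{(0,\xi_j^{(k)})}-1_{(0,\xi_j^{(k-1)})}$ are intervals on either side of the level-$(k{-}1)$ parent, possibly nested and repeated for different $j$, and the separation estimate needed to apply Rubio de Francia at each level is precisely the content of Lemma \ref{lemma:whitneyprop} in the paper; it is not a ``short computation.'' Separately, the $B^O$ analysis invokes a Carleson-type maximal bound for the tails $G_{\ge k'}$ that is not elementary for rough cutoffs and needs justification, and the base case $d=0$ is vacuous under \eqref{xispacing} since the sequence $(\xi_j)$ is strictly decreasing, so the induction would have to start at $d=1$ with a nontrivial argument of essentially the same difficulty as the inductive step. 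In short: the decomposition $B_m=B^L+B^O$ is a sensible starting point and the $B^O$ sketch captures some of the right ideas, but the inductive closure is missing and the identified gap is genuine.
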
  
In this case, however, we are not able to complete the program
as the convex set lacks the regularity relevant for techniques in \cite{MR2701349,MR2197068} to work.
Moreover,
the bounds \eqref{holder} and \eqref{locall2} for the general case seven appear entirely beyond the techniques known to us. 
We are without any bias towards validity or invalidity of these bounds.


Standard paraproducts come with exponentially growing sequences $\xi_j$, $\eta_j$ and $\zeta_j$,
all other configurations with an increasing sequence $\xi_j$ and 
an interlaced pair of increasing sequences $(\eta_j,\zeta_j)$ may be considered exotic.
We emphasize that the point here rests in the growth conditions on these sequences, 
the characteristic functions in place of smoother versions found elsewhere in the literature being a minor modification in the considered range of exponents.

Previous research on various assumptions on the sequences can be found in \cite{MR1945289, MR1979774,MR2413217,MR3337797,MR3763348},
partially in connection with bilinear Hilbert transforms on curves, 
but none of these references appears to go beyond the case $d=1$ of multi-lacunarity.
The multi-lacunarity assumption also appears in similar context elsewhere in harmonic analysis.
For example, it provides a sharp condition on sets of directions 
for which the directional maximal operator is unbounded \cite{MR613033,MR2494456}. 
These are deep facts building on a long history of related results. 
Directional Hilbert transforms in the plane with multi-lacunarity assumptions are considered in \cite{MR3846321}.
Multi-lacunary sets of directions in higher dimensions appear in \cite{MR949003,MR3432267,MR4126303}.

\bigskip

\noindent \textit{Acknowledgement.}\ The authors were funded by the Deutsche Forschungsgemeinschaft (DFG, German Research Foundation) under Germany's Excellence Strategy -- EXC-2047/1 -- 390685813 as well as SFB 1060.
Part of the research was carried out while the authors were visiting the Oberwolfach Research Institute for Mathematics, the workshop Real and Harmonic Analysis.

\section{Proof of Theorem \ref{exppara}}

Throughout this section, we fix $p_1,p_2,p_3$ as in \eqref{locall2}.
We work through a sequence of lemmata,
at the end of which we are ready to prove Theorem \ref{exppara} and Corollary \ref{expcor}.
Define the $V^r$ norm for a function  $h$ on $\Z$ or $\R$ by
\[\|h\|_{V^r}:=\sup_x |h(x)|+\sup_{N,\ x_0<x_1<\dots <x_N} \left(\sum _{n=1}^N |h(x_n)-h(x_{n-1})|^r\right)^{1/r}.\]
For a measurable function $n$, we define the linear multiplier operator
\begin{equation}\label{linmult}
M_n f(x):=\int_\R \widehat{f}(\xi)  n(\xi) e^{2\pi i x\xi} \, dx ,
\end{equation}
and we define the multiplier norms
\[\|n\|_{M^p}=\sup_{\|f\|_{p}=1} \|M_nf\|_p\ .\]
If $n$ is the characteristic function of an interval $I$, we write $M_I$ instead of $M_n$.
We also consider the Hardy--Littlewood maximal operator ${\mathcal M}$.

Our first Lemma \ref{crslemma} concerns a bilinear operator, 
which has almost the form of a bilinear multiplier. 
The occurrence of an external parameter $r$ in the exponential makes the difference. 
The main case is $\alpha=0$,
whereas the side product case $\alpha=1$ will be used to estimate certain error terms.

\begin{lemma}\label{crslemma}
  Let $\rho, \phi$ be Schwartz functions  supported in $[-2,2]$. Let  $r\in \R$ and 
  $\alpha\in \{0,1\}$. Let $\epsilon>0$ be small enough so that 
  \[\left \lvert \frac{1}{2} - \frac{1}{p_1} \right \rvert < \frac{1}{2 + \epsilon}.\] 
  Define the operator
\begin{equation*}
  B(f,g)(x):=
\int_{\R^2} \widehat{f}(\xi)\widehat{g}(\eta)
 e^{2\pi i (\xi x+\eta r)}   \sum_{j\in \N} 2^{-\alpha j} \rho(\xi+j){\phi}(2^{j}\eta )d\xi d\eta
\end{equation*}
and the averages 
\[Ag(r)(j):= \int \widehat{g}(\eta) \phi(2^{j}\eta) e^{2\pi i \eta r} d\eta.\]
Then there is a constant $C_{\rho,\phi,p_1,\epsilon}$  such that for all Schwartz functions $f$ and $g$, we have
\begin{equation}\label{crsbounds}
\|B(f,g)\|_{p_1}\le C_{\rho,\phi,p_1,\epsilon} \|f\|_{p_1} \|Ag(r)\|_{V^{2+\epsilon}}.
\end{equation}
\end{lemma}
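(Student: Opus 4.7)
\noindent\emph{Approach.} For fixed $r$ and $g$, one observes that $B(\cdot, g)$ is a linear Fourier multiplier on $f$. Setting $a_j := Ag(r)(j)$ and factoring the $\xi$ and $\eta$ integrals gives
\begin{equation*}
B(f,g) = M_n f, \qquad n(\xi) := \sum_{j \in \N} 2^{-\alpha j}\, \rho(\xi + j)\, a_j.
\end{equation*}
The plan is therefore to combine the Coifman--Rubio de Francia--Semmes multiplier theorem, which yields $\|M_n f\|_{p_1} \le C\,\|n\|_{V^{2+\epsilon}} \|f\|_{p_1}$ whenever $|1/2 - 1/p_1| < 1/(2+\epsilon)$ (precisely the hypothesis in the lemma), with a separate variation bound
\begin{equation*}
\|n\|_{V^{2+\epsilon}} \le C_{\rho,\alpha}\, \|a\|_{V^{2+\epsilon}}.
\end{equation*}

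\noindent\emph{The variation estimate.} The $L^\infty$ part is immediate: since $\supp \rho \subset [-2,2]$, only $O(1)$ indices $j$ contribute at each $\xi$, giving $\|n\|_\infty \le C_\rho \|a\|_\infty$. For the proper variation term, the case $\alpha = 1$ is handled by brute-force Minkowski in $V^{2+\epsilon}$: since $\|\rho(\cdot+j)\|_{V^{2+\epsilon}}$ is independent of $j$ by translation invariance, the summable factor $2^{-j}$ yields $\sum_j 2^{-j}\|a\|_\infty \|\rho\|_{V^{2+\epsilon}} \lesssim \|a\|_{V^{2+\epsilon}}$. For $\alpha = 0$ I would perform summation by parts in $j$. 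Setting $b_0 = a_0$ and $b_k = a_k - a_{k-1}$ for $k \ge 1$,
\begin{equation*}
n(\xi) = \sum_{k \ge 0} b_k\, R_k(\xi), \qquad R_k(\xi) := \sum_{j \ge k} \rho(\xi + j),
\end{equation*}
and the $R_k$ are nested smooth functions essentially localized to $\xi \lesssim -k$. At any $\xi$ near $-m$, only a bounded window of indices ($k$ within $O(1)$ of $m$) distinguishes $n(\xi)$ from the ``plateau'' value $a_{m-1}\,\Theta(\xi)$ with $\Theta := \sum_j \rho(\cdot + j)$. On an increasing partition $\xi_0 < \dots < \xi_N$, one bounds $|n(\xi_k) - n(\xi_{k-1})|$ by a combination of (a) smooth variation of plateau values $a_m$ across unit intervals and (b) local oscillation contributions $|b_k|$ in the transition zones; a Minkowski argument in $\ell^{2+\epsilon}$ over the partition transfers $\|b\|_{\ell^{2+\epsilon}} \le \|a\|_{V^{2+\epsilon}}$ into the desired bound on $\|n\|_{V^{2+\epsilon}}$.

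\noindent\emph{Main obstacle.} The principal technical difficulty is the $\alpha = 0$ variation estimate. The function $n$ is not literally a convolution of $\rho$ with the step function $s_a(\xi) = a_{\lfloor -\xi\rfloor}$, because $\rho$ is a smooth bump rather than a Dirac mass and the translated bumps $\rho(\cdot + j)$ overlap. Pushing the $V^{2+\epsilon}$ norm through this smoothing must respect both the nested support structure of the $R_k$ and the subadditive but nonlinear behavior of the $\ell^{2+\epsilon}$ norm on partitions. Once that is resolved, the invocation of Coifman--Rubio de Francia--Semmes is immediate and the $L^\infty$ bound and the $\alpha = 1$ case are routine.
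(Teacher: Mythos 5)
Your overall framing is exactly the paper's: freeze $r$ and $g$, recognize $B(\cdot,g)$ as a linear multiplier $M_n f$ with symbol $n(\xi)=\sum_j 2^{-\alpha j}\rho(\xi+j)\,Ag(r)(j)$, and then invoke Coifman--Rubio de Francia--Semmes together with a variation bound $\|n\|_{V^{2+\epsilon}}\lesssim\|Ag(r)\|_{V^{2+\epsilon}}$. Your treatment of $\alpha=1$ by absorbing the geometric factor $2^{-j}$ is also the paper's argument.

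The gap you flag under ``Main obstacle'' is real and remains open in your proposal. Because the translated bumps $\rho(\cdot+j)$ overlap, the symbol $n$ is \emph{not} a step function, and passing the $V^{2+\epsilon}$ norm through the smoothing via summation by parts in $j$ is not completed: the claimed transfer of $\|b\|_{\ell^{2+\epsilon}}$ to $\|n\|_{V^{2+\epsilon}}$ via ``a Minkowski argument in $\ell^{2+\epsilon}$ over the partition'' is precisely what needs a proof, and as stated it conflates the variation seminorm of $n$ on arbitrary partitions with a discrete $\ell^{2+\epsilon}$ sum over $j$. (Also note your ``plateau'' picture implicitly treats $\Theta=\sum_j\rho(\cdot+j)$ as well-behaved, but nothing is assumed about $\Theta$ beyond what follows from $\rho$ being Schwartz with compact support; more importantly, there is no reason $n$ should look like $a_m\Theta$ away from transition zones once the $\rho(\cdot+j)$ genuinely overlap.)

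The paper sidesteps all of this with a simple reduction that you are missing. Expand $\rho$ in a Fourier series on $[-2,2]$: $\rho(\xi)=1_{[-2,2]}(\xi)\sum_{k\in\Z/4}\widehat\rho_k e^{2\pi i k\xi}$. By rapid decay of $\widehat\rho_k$, it suffices to treat one $k$ at a time. For fixed $k$, split the sum over $j$ into residue classes modulo $4$: within each class the phase $e^{2\pi i kj}$ is constant, and the remaining phase $e^{2\pi i k\xi}$ becomes a translation by $k$ in $x$, which leaves $L^{p_1}$ invariant. After these reductions the bump $\rho(\xi+j)$ is replaced by $1_{[-2,2]}(\xi+j)$ with $j$ running over an arithmetic progression of gap $4$, so the translates are pairwise disjoint and $n$ is \emph{literally} a step function taking the value $Ag(r)(j)$ on the $j$-th interval. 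Its $V^{2+\epsilon}$ norm is then immediately controlled by $\|Ag(r)\|_{V^{2+\epsilon}}$, and CRS finishes the argument. So the idea you need is: replace the smooth overlapping bumps by disjoint sharp cut-offs via Fourier expansion and a mod-$4$ split, rather than trying to prove a variation bound for the smooth overlapping sum directly.
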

\begin{proof}
 We decompose $\rho$ into a Fourier series on an interval of length four
 \[\rho(\xi)=1_{[-2,2]}(\xi) \sum_{k\in \Z/4} \widehat{\rho}_k e^{2 \pi i k \xi}.\] 
 Splitting $B$ correspondingly and using the rapid decay of $\widehat{\rho}_k$,
 we see it suffices to prove bounds analogous to \eqref{crsbounds} on
 \[
\int_{\R^2} \widehat{f}(\xi)\widehat{g}(\eta)
e^{2\pi i (\xi x+\eta r)}   \sum_{j\in \N} 2^{-\alpha j} 1_{[-2,2]}(\xi+j)e^{2\pi i k (\xi+j)}\phi(2^{j}\eta )d\xi d\eta,\]
uniformly in $k\in \Z/4$.

We split the sum over $j$ into four sums depending on the congruence class of $j$ modulo four,
and we notice that the factor $e^{2\pi i k j}$ is constant in $j$ in each of the four sums.
Using further that the translation $x\to x+k$ leaves the $L^{p_1}$ norm invariant,
we see it suffices to prove bounds analogous to \eqref{crsbounds} on 
 \[
\int_{\R^2} \widehat{f}(\xi)\widehat{g}(\eta)
e^{2\pi i (\xi x+\eta r)}   \sum_{j \in \N,\ j\equiv j_0 \Mod 4} 2^{-\alpha j} 1_{[-2,2]}(\xi+j)\phi(2^{j}\eta )d\xi d\eta\]
for a fixed parameter $j_0$. 
 
We identify this expression as a linear multiplier of the form \eqref{linmult} 
applied to $f$. 
The multiplier symbol is
\begin{equation*}
n(\xi)= 
 \sum_{j\in \N, \ j\equiv j_0 \Mod 4} 2^{-\alpha j}  1_ {[-2,2]}(\xi+j) Ag(r)(j),
\end{equation*}
and it thus suffices to show
 \begin{equation}\label{multcont}\|n\|_{M^{p_1}}\le  C_{\rho,\phi,p_1,\epsilon} \|Ag(r)\|_{V^{2+\epsilon}} .
 \end{equation}

 We apply the following well-known control of the multiplier norm by variation norms
 proven by Coifman, Rubio de Francia and Semmes \cite{MR934617}.
\begin{theorem}\label{crs}
  Let $n$ be a measurable function on $\R$, then
   \[\|n\|_{M^p}\le C_{p,r}\|n\|_{V^r}, \]
  provided $1<p<\infty $ and $|1/2-1/p|\le 1/r$.
\end{theorem}  
For $\alpha=0$, inequality \eqref{multcont} follows 
if one identifies $n$ as a step function
constant on intervals of length $4$, 
taking precisely the value $Ag(r)(j)$ in the $j$-th interval,
counted in natural order,  
and taking the value $0$ outside the union of these intervals.

For $\alpha=1$, we observe that for any sequence $a(k)$ tending to zero,
the variation norm of the sequence $b(k)=2^{-k}a(k)$ is bounded by a constant times 
the supremum norm of $a$, which in turn is controlled by the variation norm of $a$.
This fact following from a plain application of the triangle inequality completes the proof of Lemma \ref{crslemma}.
\end{proof}

The next lemma passes to a localized version of actual bilinear multipliers.
The parameter $r$ in the exponent disappears, 
but we introduce a new localization parameter $ s $. 
In what follows, 
we use the translation operator defined by 
\[T_sh(x)=h(x-s).\]

\begin{lemma}\label{leplemma}
Let $\chi, \rho, \phi$ be Schwartz functions supported in $[-2,2]$ and
\begin{equation}\label{lepbound}
  B(f,g)(x):=
 T_s \chi(x) \int_{\R^2} \widehat{f}(\xi)\widehat{g}(\eta)
 e^{2\pi i (\xi+\eta)x}   \sum_{j\in \N} \rho(\xi+j)\phi(2^{j}\eta )d\xi d\eta.
\end{equation}
Given the averages $Ag(r)(j)$ and a parameter $\epsilon$ as in Lemma \ref{crslemma}, 
there is a constant $C_{\chi,\rho,\phi,p_1,\epsilon}$ 
so that for all Schwartz functions $f$ and $g$ we have 
\[\|B(f,g)\|_{p_1}\le C_{\chi,\rho,\phi,p_1,\epsilon} \|f\|_{p_1} {\mathcal M}_2(\|Ag\|_{V^{2+\epsilon}})(s) .\]
\end{lemma}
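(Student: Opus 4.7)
The plan is to reduce Lemma \ref{leplemma} to the preceding Lemma \ref{crslemma} by averaging the inner parameter over a small neighborhood of $s$. Setting $\Phi_j(x):=Ag(x)(j)$, I would recognize the inner integral defining $B(f,g)(x)$ as $\chi(x-s) B_x(f,g)(x)$, where $B_r$ denotes the operator from Lemma \ref{crslemma} with $\alpha=0$ at parameter $r$. After fixing a non-negative smooth $\omega$ with $\int\omega=1$ supported in $[-1,1]$, I would decompose
\begin{equation*}
B(f,g)(x) = \chi(x-s)\int\omega(r-s)B_r(f,g)(x)\,dr + \chi(x-s)\int\omega(r-s)\bigl[B_x(f,g)(x)-B_r(f,g)(x)\bigr]\,dr.
\end{equation*}

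For the main term, Minkowski's integral inequality and Lemma \ref{crslemma} applied for each fixed $r$ give
\begin{equation*}
\Bigl\|\chi(\cdot-s)\int\omega(r-s)B_r(f,g)(\cdot)\,dr\Bigr\|_{p_1} \le C\|f\|_{p_1}\int\omega(r-s)\|Ag(r)\|_{V^{2+\epsilon}}\,dr,
\end{equation*}
which the Hardy--Littlewood estimate controls by $C\|f\|_{p_1}\mathcal{M}_2(\|Ag\|_{V^{2+\epsilon}})(s)$.

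For the error term I would use the identity $\partial_r Ag(r)(j)=2\pi i\cdot 2^{-j}\tilde Ag(r)(j)$, where $\tilde\phi(u):=u\phi(u)$ is Schwartz and supported in $[-2,2]$ and $\tilde Ag$ is defined from $\tilde\phi$ exactly as $Ag$ is from $\phi$, together with the fundamental theorem of calculus, to write
\begin{equation*}
B_x(f,g)(x)-B_r(f,g)(x) = 2\pi i\int_r^x \tilde B^{(1)}_u(f,g)(x)\,du,
\end{equation*}
where $\tilde B^{(1)}_u$ is the operator from Lemma \ref{crslemma} at parameter $u$ with $\alpha=1$ and profile $\tilde\phi$. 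Because on the joint support of $\chi(\cdot-s)$ and $\omega(\cdot-s)$ the variable $u$ lies in a bounded interval $J$ close to $s$, Cauchy--Schwarz bounds the inner integral by a constant times $\bigl(\int_J|\tilde B^{(1)}_u(f,g)(x)|^2\,du\bigr)^{1/2}$. Taking the $L^{p_1}$ norm in $x$, Minkowski's integral inequality, valid thanks to the hypothesis $p_1\ge 2$, coupled with Lemma \ref{crslemma} at $\alpha=1$ for each fixed $u$, then yields
\begin{equation*}
\|\mathrm{error}\|_{p_1} \le C\|f\|_{p_1}\Bigl(\int_J\|\tilde Ag(u)\|_{V^{2+\epsilon}}^2\,du\Bigr)^{1/2} \le C\|f\|_{p_1}\mathcal{M}_2(\|\tilde Ag\|_{V^{2+\epsilon}})(s).
\end{equation*}

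The main obstacle will be the concluding comparison $\mathcal{M}_2(\|\tilde Ag\|_{V^{2+\epsilon}})(s)\le C\mathcal{M}_2(\|Ag\|_{V^{2+\epsilon}})(s)$, since $Ag$ and $\tilde Ag$ are built from distinct Schwartz profiles. To handle it I would use that each $Ag(\cdot)(j)$ is band-limited to $[-2^{1-j},2^{1-j}]$, so Bernstein's inequality in its Peetre-maximal form gives the pointwise bound $|\tilde Ag(r)(j)|\le C\mathcal{M}_2(Ag(\cdot)(j))(r)$ uniformly in $j$; a vector-valued Fefferman--Stein type inequality then promotes this termwise estimate to the required control of the $V^{2+\epsilon}$ norms after the spatial averaging built into $\mathcal{M}_2$. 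Summing the main and error contributions will complete the proof.
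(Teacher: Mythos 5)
Your decomposition --- mollifier-averaging the parameter $r$ around $s$ and then controlling the error $B_x-B_r$ by the fundamental theorem of calculus applied to $u\mapsto B_u$ --- is a valid variant of the paper's argument. The paper instead applies the fundamental theorem of calculus together with the product rule directly to $r\mapsto T_s\chi(r)e^{2\pi i\eta r}$, producing a term with $T_s\chi'$ (treated with $\alpha=0$) and a term with $2\pi i\eta\, T_s\chi$ (treated with $\alpha=1$ after writing $\eta\phi(2^j\eta)=2^{-j}\tilde\phi(2^j\eta)$). Your mollifier $\omega$ plays the role of $T_s\chi'$; both routes express the same localization idea and lead to the same pair of invocations of Lemma~\ref{crslemma}. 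Up through the penultimate paragraph, your argument is sound, including the Cauchy--Schwarz and Minkowski steps (Minkowski needs $p_1\ge 2$, which is guaranteed in the local $L^2$ range).

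The flaw is in the final comparison $\mathcal{M}_2(\|\tilde Ag\|_{V^{2+\epsilon}})(s)\le C\mathcal{M}_2(\|Ag\|_{V^{2+\epsilon}})(s)$. The pointwise bound $|\tilde Ag(r)(j)|\le C\mathcal{M}_2(Ag(\cdot)(j))(r)$ is correct (it follows from $\tilde Ag(r)(j)=\tfrac{2^j}{2\pi i}\partial_r Ag(r)(j)$ and the band-limitation of $Ag(\cdot)(j)$ to $[-2^{1-j},2^{1-j}]$), but it only controls the $\sup_j$ part of the $V^{2+\epsilon}$ norm. The variational part involves differences $|\tilde Ag(r)(j_n)-\tilde Ag(r)(j_{n-1})|$, and these cannot be recovered from the magnitudes of the individual $\tilde Ag(r)(j)$: the phase information is destroyed. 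A vector-valued Fefferman--Stein inequality bounds $L^p$ norms of $\ell^2$-valued maximal functions; it does not transfer $V^{2+\epsilon}$ control from one averaging family to another. So this step does not go through.

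Fortunately, the comparison is not needed, and the paper does not attempt it either. The paper's proof of the second FTC term lands on $\mathcal{M}(\|\tilde Ag\|_{V^{2+\epsilon}})(s)$ with $\tilde A$ built from $\tilde\phi$; the intended conclusion of Lemma~\ref{leplemma} is the bound by the sum (or maximum) of $\mathcal{M}_2(\|Ag\|_{V^{2+\epsilon}})(s)$ over the finitely many averaging operators arising from $\phi$ and $\tilde\phi$, with the constant absorbing the dependence on $\tilde\phi$. This is harmless downstream: the only use made of this quantity in the proof of Lemma~\ref{expplemma} is the L\'epingle--Jones--Seeger--Wright bound $\|\,\|Ag\|_{V^{2+\epsilon}}\,\|_{L^{p_2}}\le C\|g\|_{p_2}$, which holds for any Schwartz profile, in particular for $\tilde\phi$. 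You should therefore simply keep the $\tilde Ag$ term in your conclusion and drop the final Bernstein step.
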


\begin{proof}
We apply the support assumption on $\chi$ and the
fundamental theorem of calculus as well as the product rule to write \eqref{lepbound} as 
\begin{align}\label{firstftc}
&\int_{s-2}^x  T_s \chi'(r) \int_{\R^2} \widehat{f}(\xi)\widehat{g}(\eta)
 e^{2\pi i (\xi x+\eta r)}   \sum_{j\in \N} \rho(\xi+j)\phi(2^{j}\eta )d\xi d\eta dr \\
 \label{secondftc}
&\ + \int_{s-2}^x  T_s \chi(r) \int_{\R^2} \widehat{f}(\xi)\widehat{g}(\eta)
 2\pi i \eta e^{2\pi i (\xi x+\eta r)}   \sum_{j\in \N} \rho(\xi+j)\phi(2^{j}\eta )d\xi d\eta.\end{align}

We first discuss  the term \eqref{firstftc}.  We estimate the integral in $r$ with the $L^1$ norm 
to obtain the bound
\begin{multline*}
\left\| \left\| T_s \chi'(r) \int_{\R^2} \widehat{f}(\xi)\widehat{g}(\eta)
 e^{2\pi i (\xi x+\eta r)}   \sum_{j\in \N} \rho(\xi+j)\phi(2^{j}\eta )d\xi d\eta \right\|_{L^1(r)} \right\|_{L^{p_1}(x)}
\\
\le  \left\| T_s \chi'(r)  \left\|  \int_{\R^2} \widehat{f}(\xi)\widehat{g}(\eta)
 e^{2\pi i (\xi x+\eta r)}   \sum_{j\in \N} \rho(\xi+j)\phi(2^{j}\eta )d\xi d\eta\right\|_{L^{p_1}(x)}  \right\|_{L^1(r)}
\end{multline*}
for the $L^{p_1}(x)$ norm of \eqref{firstftc}.
We used Minkowski's inequality and the condition $1<p_1$ here.
Next we estimate the inner norm with Lemma \ref{crslemma}. 
Setting $\alpha=0$ and $\epsilon > 0$ small,
we obtain an upper bound by
\begin{multline*} 
  C_{\rho,\phi, p_1,\epsilon}   \left \| T_s \chi'(r)   
  \left \| f  \right\|_{p_1} \left \| Ag(r)) \right\|_{V^{2+\epsilon}}
 \right\|_{L^2(r)} \\
 \le  C_{\rho,\phi,\chi, p_1,\epsilon}  
 \|f\|_{p_1}{\mathcal M}(\|Ag\|_{V^{2+\epsilon}})(s).
\end{multline*}
The last inequality follows by recognizing a smooth average over the support of $T_s\chi'$, 
which is near $s$,  
and dominating it by the Hardy--Littlewood maximal function.
This completes the bound for first term \eqref{firstftc}. 

We rewrite the second term \eqref{secondftc} as
\[ 2\pi i 
 \int_{s-2}^x  T_s \chi(r) \int_{\R^2} \widehat{f}(\xi)\widehat{g}(\eta)
  e^{2\pi i (\xi x+\eta r)}   \sum_{j<0}  2^j\rho(\xi-j)\tilde{\phi}(2^{-j}\eta )d\xi d\eta,\]
where the new Schwartz function $\tilde{\phi}(\eta)=\eta\phi(\eta)$ depends on $\phi$ only.
We can proceed exactly as with the first term \eqref{firstftc},
but now applying Lemma \ref{crslemma} 
with $\alpha=1$ and the Schwartz function $\tilde{\phi}$
instead of $ \alpha = 0 $ and the Schwartz function $ \phi $.
This results in the desired bound 
and hence completes the proof of Lemma \ref{leplemma}.
\end{proof}

Finally, we can pass to a standard bilinear multiplier estimate.
This entails getting rid of the localization present in the previous estimate.
The multiplier below is a smooth model of the one in Theorem \ref{exppara}.

\begin{lemma}\label{expplemma}
Let $\rho$ and $\phi$ be Schwartz functions supported in $[-2,2]$ and
\[B(f,g)(x):=\int_{\R^2} \widehat{f}(\xi)
\widehat{g}(\eta) e^{2\pi i (\xi+\eta)x} \sum_{j\in \N} \rho(\xi+j)\phi(2^{j}\eta)
d\xi d\eta .  \]
There is a constant $C_{\phi, \rho, p_1,p_2}$ such that for any Schwartz functions $f$ and $g$
\[\|B(f,g)\|_{p_3'}\le C_{\phi, \rho, p_1,p_2} \|f\|_{p_1} \|g\|_{p_2}.\]
\end{lemma}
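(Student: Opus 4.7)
The plan is to pass from the localized $L^{p_1}$ estimate of Lemma \ref{leplemma} to the global $L^{p_3'}$ bound via spatial partitions of unity in both the output variable and the input $f$, combined with a variational inequality for the dyadic averages of $g$.

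First I would fix a Schwartz function $\chi$ supported in $[-2,2]$ such that $\sum_{s\in\Z}T_s\chi\equiv 1$, and observe that since $p_3'<p_1$ (which follows from $1/p_3'=1/p_1+1/p_2$) and $T_s\chi B(f,g)$ is supported in an interval of length at most $4$, H\"older's inequality on the support and the bounded overlap of $\{T_s\chi\}$ give
\[\|B(f,g)\|_{p_3'}^{p_3'}\le C\sum_s\|T_s\chi B(f,g)\|_{p_1}^{p_3'}.\]
Next, I would refine Lemma \ref{leplemma} by also decomposing $f=\sum_{s'\in\Z}T_{s'}\tilde\chi\cdot f$ for a second bump partition $\tilde\chi$ of the same type. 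The key point is that the linear operator $f\mapsto B(f,g)$ has kernel with Schwartz decay in $x-y$ inherited from $\check\rho$, so that localizing $f$ to a neighborhood of $s'$ and its image to a neighborhood of $s$ yields a refinement of Lemma \ref{leplemma} of the form
\[\|T_s\chi B(T_{s'}\tilde\chi f,g)\|_{p_1}\le C_N(1+|s-s'|)^{-N}\|T_{s'}\tilde\chi f\|_{p_1}G(s)\]
for every $N$, where $G(s):={\mathcal M}_2(\|Ag\|_{V^{2+\epsilon}})(s)$ is the weight from Lemma \ref{leplemma}. Summing in $s'$ and applying Young's convolution inequality for sequences gives
\[\|T_s\chi B(f,g)\|_{p_1}\le CG(s)(a*w)(s),\qquad a_{s'}:=\|T_{s'}\tilde\chi f\|_{p_1},\ w(k):=(1+|k|)^{-N}.\]

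Substituting this back and applying H\"older's inequality in $s$ with the exponents $p_2/p_3'$ and $p_1/p_3'$, which are conjugate because $1/p_1+1/p_2=1/p_3'$, I obtain
\[\|B(f,g)\|_{p_3'}^{p_3'}\le C\Bigl(\sum_sG(s)^{p_2}\Bigr)^{p_3'/p_2}\Bigl(\sum_s(a*w)(s)^{p_1}\Bigr)^{p_3'/p_1}.\]
The $(a*w)$ factor is controlled by $C\|f\|_{p_1}^{p_3'}$ using Young's inequality (with $\|w\|_{\ell^1}$ finite for $N>1$) together with the bounded-overlap identity $\sum_{s'}|T_{s'}\tilde\chi|^{p_1}\le C$. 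For the $G$ factor I would pass from the $\ell^{p_2}$ sum over integers to the $L^{p_2}$ integral using the maximal-function nature of $G$, invoke the boundedness of ${\mathcal M}_2$ on $L^{p_2}$ which is valid since $p_2>2$, and finally apply a Lepingle/Jones--Seeger--Wright type variational inequality to conclude $\|\|Ag\|_{V^{2+\epsilon}}\|_{L^{p_2}}\le C\|g\|_{p_2}$.

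The hard part will be justifying the off-diagonal decay in the refined version of Lemma \ref{leplemma}, as its original statement absorbs all $f$-dependence into the factor $\|f\|_{p_1}$. One has to revisit its proof---tracking the Schwartz decay of $\check\rho$ through the fundamental theorem of calculus step, Minkowski's inequality, and the Coifman--Rubio de Francia--Semmes multiplier bound from Lemma \ref{crslemma}---to see that localizing $f$ far from the output window shrinks the operator norm by an arbitrary power of $(1+|s-s'|)^{-1}$. Once this decay is in hand, the exponent bookkeeping above closes the argument cleanly.
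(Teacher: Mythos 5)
Your proof is correct and follows essentially the same route as the paper's. You discretize the spatial localization (summing over $s,s'\in\Z$) where the paper uses a continuous partition of unity ($\int T_s\chi^3\,ds$, $\int T_{s+t}\chi\,dt$), and you work directly with $\|B(f,g)\|_{p_3'}$ using H\"older on the bounded support of $T_s\chi$, where the paper dualizes against $h\in L^{p_3}$ and absorbs an extra factor $\|T_s\chi\|_{p_2}$; these are cosmetic differences and the bookkeeping is equivalent. The one substantive step you correctly flag as needing verification—the off-diagonal decay $\|T_s\chi\,B(T_{s'}\tilde\chi f,g)\|_{p_1}\lesssim (1+|s-s'|)^{-N}\|T_{s'}\tilde\chi f\|_{p_1}\,G(s)$—is exactly what the paper establishes: after expressing $B(fT_{s+t}\chi,g)$ in frequency, it integrates by parts twice in $\xi$, which produces the factor $(x-s-t)^{-2}\sim t^{-2}$ on the support of $T_s\chi$ and leaves an expression of the same shape (with derived Schwartz functions $\chi_\beta,\rho_\beta$) to which Lemma~\ref{leplemma} again applies. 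Decay $N=2$ suffices for summability of $w$, though more integrations by parts would give any $N$. With that step supplied, the final maximal-function and L\'epingle/Jones--Seeger--Wright steps match the paper's.

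Two minor points to keep in mind if you write this out in full. First, your heuristic that the kernel decay is ``inherited from $\check\rho$'' is slightly imprecise: since $\rho(\xi+j)$ is modulated by $j$, the decay in the paper actually comes from differentiating the cut-off $\widehat\chi$ and $\rho$ after the change of variables, not from $\check\rho$ alone; but the mechanism (smoothness in $\xi\Rightarrow$ spatial decay) is the right one. Second, when you write $G(s)=\mathcal{M}_2(\|Ag\|_{V^{2+\epsilon}})(s)$ and later invoke boundedness of $\mathcal M_2$ on $L^{p_2}$, note that this is precisely where $p_2>2$ from the local-$L^2$ hypothesis \eqref{locall2} enters, so it is worth stating explicitly.
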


\begin{proof}
It suffices to prove the dual estimate 
\[\int_\R h(x)B(f,g)(x)\, dx \le C_{\phi, \rho, p_1,p_2} \|f\|_{p_1} \|g\|_{p_2}\|h\|_{p_3}\]
for all Schwartz functions $h$.
Fixing a suitable normalized non-negative Schwartz function $\chi$ 
supported on $[-2,2]$,
we write 
\begin{equation}\label{localize}
 \int_\R h(x)B(f,g)(x)\, dx  
=\int_{\R^3} h(x)T_{s} \chi^3(x) B(fT_{s+t} \chi,g)(x)\, dsdtdx,
\end{equation}
which can be done because $\int_\R T_s\chi(x) \, ds$ is a constant function 
as is the integral over $ s $ of $ T_s \chi^{3}(x) $.
Next we seek an estimate for 
\begin{equation}\label{doublelocal}
\|T_{s} \chi  B(fT_{s+t} \chi,g)\|_{p_1}
\end{equation}
for arbitrary real $t$. 

For $|t|\le 4$, 
we use Lemma \ref{leplemma} and an epsilon depending only on $p_1$
to obtain the upper bound 
\[\|T_{s} \chi  B(fT_{s+t} \chi,g)\|_{p_1} \le  C_{\chi,\rho,\phi,p_1} \|f T_{s+t}\chi\|_{p_1} {\mathcal M}(\|Ag\|_{V^{2+\epsilon}})(s) . \]
For $|t|> 4$, we write the Fourier expansion
\begin{multline}
B(fT_{s+t} \chi,g)(x) \label{tlargerfour} \\
=
\int_{\R^3} \widehat{f}(\zeta) \widehat{\chi}(\xi-\zeta )
\widehat{g}(\eta) e^{2\pi i [(\xi+\eta)x-(s+t)(\xi-\zeta)]} \sum_{j\in \N} \rho(\xi+j)\phi(2^{j}\eta)
d\xi d\eta d\zeta . 
\end{multline}
Integrating by parts twice,
we can write the $\xi$ integral in \eqref{tlargerfour} 
as
\[\sum_{\beta=1}^3 \frac{1}{ (x-s-t)^{2}} \int_{\R} \widehat{\chi}_{\beta}(\xi-\zeta ) e^{2\pi i [(\xi+\eta)x-(t+s)(\xi-\zeta)]} \sum_{j<0} \rho_\beta (\xi-j)
d\xi , \]
where the three pairs of Schwartz functions $(\chi_\beta,\rho_\beta)$, $ \beta = 1,2,3 $,
are determined by the product rule.
Set 
\[
\tilde{\chi}(x) = \sum_{\beta = 1} ^{3} |\chi_\beta(x)| + |\chi(x)|.
\]
Note that $|x-s-t|$ is comparable to $|t|$
when $ x \in [s-2,s+2] $,
which is the essential domain of integration in \eqref{localize}. 
Inserting this into \eqref{tlargerfour}
and using Lemma \ref{leplemma},
we obtain for \eqref{doublelocal} the upper bound
\[C_{\chi,\rho,\phi,p_1}  t^{-2}\|f T_{s+t} \tilde{\chi} \|_{p_1} {\mathcal M}( \|Ag\|_{V^{2+\epsilon}})(s) .\]
Combining the two cases of small and large $t$, we obtain for \eqref{doublelocal} the bound
\[ C_{\chi,\rho,\phi,p_1}  (1+t^2)^{-1}\|f T_{s+t} \tilde{\chi} \|_{p_1} {\mathcal M}(\|Ag\|_{V^{2+\epsilon}})(s) .\]

Turning back to \eqref{localize}, 
we apply H\"older's inequality on the integral in $x$ and a trivial bound on a factor $\|T_s\chi\|_{p_2}$  
to estimate \eqref{localize} by 
 \begin{multline*}
C_{\chi,p_2} \int_{\R^2} \|h T_{s} \chi\|_{p_3}
\|T_{s} \chi B(fT_{s+t} \chi,g)\|_{p_1}\, dsdt \\
\le C_{\chi,\rho,\phi,p_1,p_2} \int_{\R^2} \|h T_{s} \chi\|_{p_3}
(1+t^2)^{-1}\|f T_{s+t} \tilde{\chi} \|_{p_1} {\mathcal M}(\|Ag\|_{V^{2+\epsilon}})(s)\, dsdt \\
\le C_{\chi,\rho,\phi,p_1,p_2} \int_{\R} \|h \|_{p_3}
(1+t^2)^{-1}\|f \|_{p_1} \|{\mathcal M}(\|Ag\|_{V^{2+\epsilon}})\|_{p_2}\, dt. 
\end{multline*}
In the last line, we have applied H\"older's inequality and noted
\[\int_\R  \|h T_{s} \chi\|_{p_3}^{p_3}\, ds
  =  \int_{\R^2 } |h(x)T_s\chi(x)|^{p_3} dx ds
  = C_{\chi,p_3}\|h\|_{p_3}^{p_3} . \]
The same computation also applies to the factor with $ L^{p_1} $ norm.
The integral in $t$ is trivial, 
and hence we obtain for \eqref{localize} the upper bound 
\[
C_{\chi,\rho,\phi,p_1,p_2} \|h \|_{p_3} \|f \|_{p_1} \|\mathcal{M}(\|Ag\|_{V^{2+\epsilon}})\|_{p_2} .\]

It remains to observe 
\[\|\mathcal{M}(\|Ag\|_{V^{2+\epsilon}})\|_{p_2}
\le C_{\phi,p_2}  \|g\|_{p_2} . \]
This follows from the Hardy--Littlewood maximal theorem
and a well-known variational bound stated in the following theorem,
whose general formulation we quote from Jones, Seeger and Wright \cite{MR2434308}.
\begin{theorem}
Let $1<p<\infty$ and $2<r<\infty$.
Let $\phi$ be a Schwartz function and define $\phi_t(x)=2^{-t}\phi(2^{-t} x)$.
Define the averaging operators
  \[Ah(x)(t)=\int_\R h(x-y)\phi_t(y)\, dy. \]
  Then we have the bound
  \[\|\|Ah\|_{V^r}\|_{L^p}\le C_{p,r}\|h\|_p.\] 
\end{theorem}
The theorem goes back to L\'epingle in the martingale setting \cite{MR420837}, 
and it was introduced for applications in harmonic analysis by Bourgain \cite{MR812567}.
However, the precise formulation we use is from \cite{MR2434308}.
Note that the convolution operator appearing in this Theorem 
can be written equivalently as a multiplier corresponding to our definition of $A$.
This completes the proof of Lemma \ref{expplemma}.
\end{proof}

We are now ready to prove Theorem \ref{exppara}.
What remains is to pass from the smooth model in Lemma \ref{expplemma}
to the rough paraproduct defined using characteristic functions of intervals instead of smooth Schwartz bumps.

\begin{proof}[Proof of Theorem \ref{exppara}]
We regroup the multiplier operator of Theorem \ref{exppara} as
\[B_m(f,g)=\sum_{j\in \N} M_{(-\infty,-j-1)} f M_{[2^{-j-1},2^{-j} ) } g .\] 
Pick a smooth function $\rho$ supported in $[-1,1)$ such that 
\[ \sum_{k \in \Z} T_{-k} \rho(k) = \sum_{k\in \Z} \rho(\xi+k)=1. \]
We compare $B_m$ with
\[B_{\tilde{m}}(f,g)=\sum_{j\in \N} \sum_{k>j} M_{T_{-k} \rho} fM_{[2^{-j-1},2^{-j})}g .\]
The difference satisfies
\[(B_{\tilde{m}}-B_m )(f,g)=\sum_{j\in \N} M_{T_{-j-1}\rho} M_{[-j-1,-j]} f M_{[2^{-j-1},2^{-j})} g,\]
and we have the estimate
\begin{multline*}\|(B_{\tilde{m}}- B_{{m}})(f,g)\|_{p_3'} \\
\le \|(\sum_{j\in \N} |M_{T_{-j-1}\rho }M_{[-j-1,-j]   } f |^2)^{1/2}\|_{p_1}
\|(\sum_{j\in \N} |M_{[2^{-j-1},2^{-j})}g|^2)^{1/2}\|_{p_2} .
\end{multline*}
The second factor is bounded by $C_{p_2}\|g\|_{p_2}$ by the following
well-known square function estimate due to Rubio de Francia \cite{MR850681}.
\begin{theorem}\label{rdf}
Let $\mathcal I$ be a collection of pairwise disjoint intervals.
Then, for $2<p<\infty$, there is a constant $C_p$ such that for all Schwartz functions $f$
we have
\[ \|(\sum_{I\in {\mathcal I}}|M_If(x)|^2)^{1/2}\|_p\le C_p\|f\|_p .\]
\end{theorem}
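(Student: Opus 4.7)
The plan is to prove this classical one-sided Rubio de Francia square function inequality by viewing the expression on the left as the $L^p(\ell^2)$-norm of the vector-valued operator $Tf:=(M_If)_{I\in\mathcal I}$, and then invoking vector-valued Calder\'on--Zygmund theory. The pairwise disjointness of $\mathcal I$ will be the crucial structural input.

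First I would record the $L^2$ base case, which is immediate from Plancherel and disjointness:
\[
\Bigl\|\Bigl(\sum_{I\in\mathcal I}|M_If|^2\Bigr)^{1/2}\Bigr\|_2^2=\sum_I\int_I|\widehat f|^2\le\|f\|_2^2.
\]
Next I would upgrade this bound to the range $p>2$ by establishing a weighted $L^2$ estimate of the form
\[
\int\sum_I|M_If|^2 w\,dx\le C[w]_{A_1}\int|f|^2w\,dx
\]
and then applying the Rubio de Francia extrapolation theorem to obtain the unweighted $L^p$ bound for every $p>2$. The weighted inequality itself would come from checking the vector-valued Calder\'on--Zygmund conditions on $T$: an $\ell^2$-size bound and an $\ell^2$-H\"ormander smoothness condition on the convolution kernel $(K_I)_{I\in\mathcal I}$, where $K_I$ is the Fourier multiplier kernel of $M_I$, a modulated sinc.

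The hard part will be the $\ell^2$-H\"ormander condition. Each individual $K_I$ decays only like $|x|^{-1}$, and there is no a priori bound on the cardinality of $\mathcal I$, so a termwise estimate is out of the question. Disjointness converts this into a tractable problem: one groups the intervals dyadically according to their distance to a reference frequency, and at each scale the orthogonality of the associated modulations together with Plancherel yields an $\ell^2$-bound with square-root cancellation. An alternative route, closer in spirit to Rubio de Francia's original argument, is to write each $M_I$ as a difference of two Carleson-type partial Fourier integrals and deduce the $\ell^2$-bound from Carleson's pointwise maximal theorem by a summation trick. Either path is non-trivial, and the interplay between the Dirichlet-kernel smoothness and the disjointness of the $I$'s is what makes the theorem go beyond what a smooth Littlewood--Paley decomposition would give.
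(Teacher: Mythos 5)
The paper does not prove this theorem at all: Theorem~\ref{rdf} is quoted as a known result of Rubio de Francia, with a citation to \cite{MR850681}, and is used as a black box throughout. There is therefore no ``paper's proof'' to compare against, and your task here reduces to whether your sketch would actually constitute a proof.

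Your outline captures the right general flavor (the $L^2$ case from Plancherel, weighted $L^2$ plus extrapolation, vector-valued Calder\'on--Zygmund input), but as written it has a genuine gap at the central step. You propose to verify an $\ell^2$-H\"ormander condition directly for the kernel tuple $(K_I)_{I\in\mathcal I}$ of the raw operator $f\mapsto (M_If)_I$. This fails. The kernel of $M_{[a,b]}$ is $K_{[a,b]}(x)=(e^{2\pi ibx}-e^{2\pi iax})/(2\pi ix)$, so the modulation parameters are the interval endpoints, which are unbounded. A short computation with $|K_I(x-y)-K_I(x)|$ shows a contribution of size roughly $\min(1,b_I|y|)/|x|$ from the modulation, and summing the squares over $I$ diverges already in the simplest case $\mathcal I=\{[n,n+1]:n\in\Z\}$. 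Thus the vector-valued kernel is not a Calder\'on--Zygmund kernel with values in $\ell^2$, and the weighted bound cannot be obtained this way without further preparation. The actual argument first replaces each $M_I$ by a smooth Littlewood--Paley-type projection adapted to a Whitney decomposition of $I$ (relative to its two endpoints); that smoothed vector-valued operator does satisfy the $\ell^2$-H\"ormander condition, and there is then a separate, nontrivial step comparing the rough square function to the smooth one. Your alternative route via Carleson is also underspecified: Carleson's theorem controls $\sup_I|M_If|$, and upgrading an $\ell^\infty$ bound to the required $\ell^2$ bound is precisely where the difficulty lies; the ``summation trick'' you allude to is not a routine step. In short, the extrapolation skeleton is fine, but the claim that disjointness alone makes $(M_I)_I$ into a vector-valued Calder\'on--Zygmund operator is incorrect, and the Whitney smoothing step you omit is the heart of the theorem.
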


To estimate the first factor, 
we note that the operator $M_{T_{-j-1}\rho}$ is dominated pointwise
by a constant multiple of the Hardy--Littlewood maximal function. 
Using the Fefferman--Stein maximal inequality \cite{MR284802} as well as Theorem \ref{rdf},
we can estimate
\begin{multline*}
  \|(\sum_j (M_{(T_{-j-1}\rho }M_{[-j-1,-j]   } f )^2)^{1/2}\|_{p_1} \\
\le C_{\rho,p_1}\|(\sum_j (\mathcal{M} M_{[-(j+1),-j]   } f )^2)^{1/2}\|_{p_1} \\
\le C_{\rho,p_1}\|(\sum_j ( M_{[-(j+1),-j]   } f )^2)^{1/2}\|_{p_1} \le 
C_{\rho,p_1}\|f\|_{p_1} .
\end{multline*} 

It then remains to estimate $B_{\tilde{m}}$. 
We cut the intervals $ [2^{-j-1},2^{-j}) $ into two equally long halves.
For $ \beta \in \{0,1\} $, define
\begin{align}\label{ee}
m_{\beta}(\xi, \eta)
  &= \sum_{k\in \N}   T_{-k} \rho(\xi)  \sum_{j\in \N, j<k} 
  1_{[2^{-j-1},2^{-j-1} + 2^{-j-2})  +  \beta 2^{-j-2}}(\eta)
\end{align}
so that $ \tilde{m} = m_0 + m_1 \nonumber  $.
We estimate the corresponding two multiplier operators separately.
Set
\begin{equation}
\label{eq:tildeg}
\tilde{g}_\beta =\sum_{j \in \N} M_{[2^{-j-1},2^{-j-1} + 2^{-j-2})  +  \beta 2^{-j-2}}  g, 
\end{equation}
and observe
\[\sum_{j\in \N, j<k} M_{[2^{-j-1},2^{-j-1} + 2^{-j-2})  +  \beta 2^{-j-2}} g= \tilde{g}_\beta-\phi_{\beta,k}*\tilde{g}_\beta,\]
where $\widehat{\phi}_{\beta,k}= \widehat{\phi}_\beta(2^k x)$ and $\widehat{\phi}_{\beta}$ is a smooth function
supported in \[
[-1 ,1] + [-\beta/4 , \beta/4]
\] 
and constant one on \[
[-3/4 , 3/4 ] + [-\beta/4 , \beta/4].
\]
Hence the multiplier operator corresponding to \eqref{ee} becomes 
\begin{equation}\label{eepart}
\sum_{k\in \N } (M_{T_{-k} \rho}f) \tilde{g}_\beta- \sum_{k\in \N} (M_{T_{-k} \rho}f)(\phi_{\beta , k} *\tilde{g}_\beta).
\end{equation}
The second term in \eqref{eepart} is bounded by
\[\|\sum_{k\in \N} (M_{T_{-k} \rho}f)(\phi_{\beta,k} *\tilde{g}_\beta)\|_{p_3'} \le C_{p_1,p_2}  \|f\|_{p_1} \|\tilde{g}_\beta\|_{p_2}
\le C_{p_1,p_2}  \|f\|_{p_1} \|g\|_{p_2} , \]
the first inequality following from Lemma \ref{expplemma} 
and the second inequality being a consequence of the following Theorem \ref{hmt} 
below by Coifman, Rubio de Francia and Semmes \cite{MR934617}.

\begin{theorem}\label{hmt} 
Let  $n$ be a bounded function on the real line.
If
\[
\sup_{j \in \Z}  \left \lVert n 1_{[2^j,2^{j+2}) \cup [-2^j,-2^{j+2})} \right \rVert_{V^1} \le A ,
\]
then for each $1<p<\infty$
there is a constant $C_p$ such that
\[\|n\|_{M^p}\le C_p A. \]
\end{theorem}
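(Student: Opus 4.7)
The statement is essentially the Marcinkiewicz multiplier theorem phrased through the $V^1$-norm, and my plan is a three-step Littlewood--Paley argument: cut $n$ into dyadic-annulus pieces, bound each piece uniformly on $L^p$ via the preceding Coifman--Rubio de Francia--Semmes estimate, and reassemble using a square function inequality.

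First, I would pick a smooth partition of unity $(\chi_j)_{j\in\Z}$ on $\R\setminus\{0\}$ subordinate to the annuli $A_j=[2^j,2^{j+2})\cup[-2^{j+2},-2^j)$, arranged so that $\|\chi_j\|_{V^1}\le C$ uniformly in $j$. Setting $n_j:=n\chi_j$, the product rule for $V^1$ combined with the hypothesis yields $\|n_j\|_{V^1}\le CA$ and $\supp n_j\subset A_j$, so the task reduces to estimating $\sum_j M_{n_j}f$ in $L^p$.

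Second, I would bound each piece uniformly: Theorem~\ref{crs} with $r=1$ (valid for every $1<p<\infty$ since $|1/2-1/p|<1/2\le 1$) gives $\|M_{n_j}f\|_p\le C_pA\|f\|_p$ directly. A more structural derivation, useful for the next step, uses the Stieltjes representation $n_j(\xi)=\int 1_{(t,\infty)}(\xi)\,d\sigma_j(t)$ with $|\sigma_j|(\R)\le CA$, realizing $M_{n_j}$ as a Bochner integral of translated Hilbert transforms, each uniformly $L^p$-bounded.

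Third, I would assemble. Since $M_{n_j}f$ has Fourier support in $A_j$ and the annuli have bounded overlap, duality against a smooth Littlewood--Paley square function yields
\[\|M_nf\|_p\le C_p\Bigl\|\Bigl(\sum_j|M_{n_j}f|^2\Bigr)^{1/2}\Bigr\|_p.\]
Writing $M_{n_j}f=M_{n_j}\tilde\Delta_jf$ for a smooth Littlewood--Paley projection $\tilde\Delta_j$ equal to one on $A_j$ reduces the remaining task to the vector-valued upgrade
\[\Bigl\|\Bigl(\sum_j|M_{n_j}g_j|^2\Bigr)^{1/2}\Bigr\|_p\le C_pA\Bigl\|\Bigl(\sum_j|g_j|^2\Bigr)^{1/2}\Bigr\|_p,\]
applied with $g_j=\tilde\Delta_jf$ and followed by the standard square function bound $\|(\sum_j|\tilde\Delta_jf|^2)^{1/2}\|_p\le C_p\|f\|_p$.

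The main obstacle is precisely this vector-valued upgrade: a direct Minkowski argument from the Stieltjes representation is too lossy because the measures $\sigma_j$ genuinely depend on $j$. My intended remedy is to combine the Stieltjes representation with the uniform $L^p(\ell^2)\to L^p(\ell^2)$-boundedness of the family $\{M_{1_{(t,\infty)}}\}_{t\in\R}$, itself a classical consequence of vector-valued Calder\'on--Zygmund theory for the Hilbert transform; the bounded overlap of the annuli $A_j$ makes it possible to interchange the $j$-summation and the $t$-integration in a controlled fashion. Equivalently, one can view $f\mapsto(M_{n_j}f)_j$ as an $\ell^2$-valued singular integral operator, verify its $L^2(\ell^2)$-boundedness by Plancherel, and derive the required kernel estimates from the $V^1$ structure on each annulus.
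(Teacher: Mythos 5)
The paper does not prove Theorem~\ref{hmt}; it is quoted as a black box from Coifman, Rubio de Francia and Semmes \cite{MR934617}, where it is obtained as a localized corollary of the $V^r$-multiplier theorem (the paper's Theorem~\ref{crs}) together with Rubio de Francia's square function estimate. Your argument is the classical Littlewood--Paley proof of the Marcinkiewicz multiplier theorem, and its overall structure is sound: smooth dyadic decomposition of $n$, uniform $L^p$ bounds on the pieces from Theorem~\ref{crs} with $r=1$ (the hypothesis $|1/2-1/p|\le 1$ is vacuous for $1<p<\infty$), and reassembly via square functions.

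Two points of imprecision, neither fatal. First, the ``equivalently'' in your last paragraph is misleading: the diagonal operator $(g_j)_j\mapsto(M_{n_j}g_j)_j$ on $L^p(\ell^2)$ and the column operator $f\mapsto(M_{n_j}f)_j$ from $L^p$ to $L^p(\ell^2)$ are not the same object. Happily, the column formulation is the easier of the two and already finishes the proof: after the duality step one needs only $\|(\sum_j|M_{n_j}f|^2)^{1/2}\|_p\lesssim A\|f\|_p$, so the insertion of the smooth projections $\tilde\Delta_j$ and the $L^p(\ell^2)\to L^p(\ell^2)$ upgrade can be dropped entirely. Second, the Calder\'on--Zygmund kernel estimates you invoke deserve to be spelled out. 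Writing $K_j=\check n_j$, one integration by parts gives $|K_j(x)|\lesssim A\min(2^j,|x|^{-1})$ and $|K_j'(x)|\lesssim A\min(2^{2j},2^j|x|^{-1})$; summing the squares in $j$ (for the column kernel) or taking the supremum (for the diagonal kernel) of $|K_j(x-y)-K_j(x)|$ over $|x|>2|y|$ yields the Hörmander integral condition with constant $\lesssim A$, which together with the trivial $L^2(\ell^2)$ bound from $\|n_j\|_\infty\le A$ closes the argument. If you instead pursue the Stieltjes route, beware that the measures $\sigma_j$ genuinely depend on $j$, so the interchange of $j$-summation and $t$-integration requires the weighted Rubio de Francia argument (or a Cauchy--Schwarz in $d|\sigma_j|$ followed by Rubio de Francia's theorem) rather than a direct Minkowski step; the CZ route you sketch at the end avoids this and is cleaner.
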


This is the sharp version of the H\"ormander--Mikhlin multiplier
theorem, a stronger version of Theorem \ref{crs} above.
The assumption on the total variation is obviously satisfied 
by the multiplier in \eqref{eq:tildeg}. It only jumps a uniformly bounded number of 
times in each of the test intervals,
each of the jumps being of height one.
Hence we have the desired bound for the second term in \eqref{eepart}.

For the first term in \eqref{eepart}, 
we have the bound 
\[\|\sum_{k\in \N} (M_{T_{-k} \rho}f) \tilde{g}_{\beta} \|_{p_3'}\le 
\|\sum_{k\in  \N} (M_{T_{-k} \rho}f) \|_{p_1}
\|\tilde{g} \|_{p_2}\le C_{\rho, p_1}\|f\|_{p_1}\|g\|_{p_2}\]
where we again used Theorem \ref{hmt} for the second factor.
The first factor is estimated by Theorem \ref{crs} as
the corresponding multiplier is locally constant except for in $ [-1,1]$
where it is a smooth function of total variation one.
\end{proof}

\begin{proof}[Proof of Corollary \ref{expcor}]
We write the multiplier in the corollary as
\begin{equation}\label{exolac}
m=\sum_{j\in \N} 1_{[-(j+1), -j)}(\xi)1_{[2^{-j}, 1)} (\eta)+ \sum_{j\in \N} m_{j},
\end{equation}
where $m_j$ is the characteristic function of the set
\begin{equation}
  \label{eq:add_set_132}
  \{ (\xi,\eta):  -(j+1)\le \xi \le -j , 2^{\xi}\le \eta <2^{-j} \} .
\end{equation}
The bilinear multiplier corresponding to the first summand
in \eqref{exolac} is bounded by Theorem \ref{exppara}.

To estimate the second summand, 
we pair the bilinear multiplier with an arbitrary function $h \in L^{p_3}$ 
and write
\begin{multline*}
\int_\R \sum_{j\in \N} B_{m_j}(f,g)(x)h(x)\, dx \\
= \sum_{j\in \N} \int_\R B_{m_j}(M_{[-(j+1),-j)}f,M_{[2^{-(j+1)}, 2^{-j})}g)(x)  \\
 \times M_{[j,(j+1))+[-2^{-j}, -2^{-(j+1)})}h(x)\, dx.
\end{multline*}
Here we have inserted an additional multiplier to restrict the frequency support of $h$.
This is possible as a bilinear multiplier applied to a pair of functions 
with frequency supports in the intervals $I$ and $J$,
results in a function that has frequency support in $-I-J$.

We identify each piece $B_{m_j}$ with a multiplier of the form corresponding to the second case in the list of examples of convex sets in the introduction,
that is, we notice the slope of the curved boundary line of \eqref{eq:add_set_132} is bounded above and below by comparable numbers. 
Each $ B_{m_j} $ is hence individually bounded, 
and we can estimate the display above by
\[  \sum_{j\in \N} \|M_{[-(j+1),-j)}f\|_{p_1}\|M_{[2^{-(j+1)}, 2^{-j})}g\|_{p_2}\|M_{[j,(j+1))+[-2^{-j}, -2^{-(j+1)})}h \|_{p_3}\]
Applying H\"older's inequality, we estimate this by
\begin{multline*} 
  \|(\sum_{j\in \N} |M_{[-(j+1),-j)}f|^{p_1})^{1/p_1}\|_{p_1}  \\
 \times \|(\sum_{j\in \N} |M_{[2^{-(j+1)}, 2^{-j})}g|^{p_2})^{1/p_2}\|_{p_2} \\
 \times  \|(\sum_{j\in \N} |M_{[j,(j+1))+[-2^{-j}, -2^{-(j+1)})}h|^{p_3})^{1/p_3}\|_{p_3} .
\end{multline*}
Because $p_1 > 2$ and the intervals $ [-(j+1), -j)$ are disjoint, 
we can use Theorem \ref{rdf} and estimate the first factor by
\begin{multline*}  
  \|(\sum_{j\in \N} |M_{[-(j+1),-j)}f|^{p_1})^{1/p_1}\|_{p_1}
  \le 
 \|(\sum_{j\in \N} |M_{[-(j+1),-j)}f|^{2})^{1/2}\|_{p_1} \\
\le C_{p_1} \|f\|_{p_1}.
\end{multline*}
Similarly, we can estimate the factors corresponding to $ g $ and $ h $.
The intervals $ [2^{-(j+1)}, 2^{-j}) $ relevant to $ g $ are also disjoint,
and the intervals $[j,(j+1))+[-2^{-j}, -2^{-(j+1)})  $ are disjoint for $ j $ even 
or $ j $ odd separately.
This concludes the the desired bound for the second term in \eqref{exolac}
and hence the proof of Corollary \ref{expcor}.
\end{proof}

\section{Multi-lacunary paraproducts}

In this section, we prove Theorem \ref{mlt}.
We begin by noting the natural decomposition of multi-lacunary sets.
\begin{lemma}

If $X$ is $(d,b)$-lacunary, then there exists a partition 
\[X=O_0\cup \dots \cup O_d\]
such that for every $i<d$ the set
\[X_i:=O_0\cup \dots \cup O_i\]
is $(i,b)$-lacunary and any two points 
 $\xi,\xi'$ in $O_{i+1}$ satisfy 
\[ \dist (\xi,\xi') \ge 2^{-b} \dist(\xi, X_i).\]
\end{lemma}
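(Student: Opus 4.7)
The proof is by induction on $d$, peeling off the outer layer $O_d$ from a $(d,b)$-lacunary set using the recursive structure of Definition \ref{def:fourierlac}.

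For the base case $d=0$, the set $X$ is a singleton, so setting $O_0 = X$ gives a trivial decomposition; the conditions indexed by $i<0$ are vacuous.

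For the inductive step, suppose the statement holds for $(d,b)$-lacunary sets, and let $X$ be $(d+1,b)$-lacunary. By the definition, there is a partition $X = L \cup O$ where $L$ is $(d,b)$-lacunary and every pair $\xi \ne \xi'$ in $O$ satisfies $\dist(\xi,\xi') \ge 2^{-b}\dist(\xi,L)$. Apply the induction hypothesis to $L$ to obtain a partition $L = O_0 \cup \dots \cup O_d$ with the stated properties relative to $L$, and then declare $O_{d+1} := O$. For $i \le d$, the set $X_i = O_0 \cup \dots \cup O_i$ is exactly the one produced by the inductive decomposition of $L$, so it is $(i,b)$-lacunary; in particular $X_d = L$ is $(d,b)$-lacunary. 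The distance condition for pairs in $O_{i+1}$ with $i<d$ is inherited from the induction hypothesis, while the remaining distance condition, for pairs in $O_{d+1} = O$ with reference set $X_d = L$, is precisely the one supplied by the definition of $(d+1,b)$-lacunarity.

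Since the indexing matches up ($X_i$ is required to be $(i,b)$-lacunary for $i<d+1$, i.e.\ for $i \le d$, and the distance conditions range over the same indices), the inductive step is complete. There is no genuine obstacle here; the only thing to watch is the bookkeeping, namely that the $(d,b)$-lacunary set $L$ extracted from the top level of the recursion coincides with $X_d$ in the produced decomposition, which is why adding $O = O_{d+1}$ on top preserves all earlier properties without renaming.
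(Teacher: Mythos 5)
Your proof is correct and takes essentially the same approach as the paper, which simply says ``we successively decompose the limit sets in the definition of $(d,b)$-lacunarity''; you have spelled out that one-line argument as an explicit induction on $d$, with the bookkeeping carried out carefully and correctly.
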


\begin{proof}
We successively decompose the limit sets 
in the definition of $(d,b)$-lacunarity.
\end{proof}

\subsection*{Reductions}
Consider the assumptions of Theorem \ref{mlt}.
We first break up the multiplier by decomposing the 
interval $[2^{-j},2^{2-j})$ into four equally long intervals
and intersecting these intervals with $[\xi_j,\zeta_j)$.
In other words, for integers $ 0 \le \beta \le 3 $, 
we choose numbers $\eta^{(\beta)}_j$ and $\zeta^{(\beta)}_j$ such that
\[1_{[\eta_j,\zeta_j)}=\sum_{m=0}^3 1_{[\eta_j^{(\beta)},\zeta_j^{(\beta)})}\]
and 
\begin{equation} \label{someeta}
2^{-j}+ \beta 2^{-j}\le \eta^{(\beta)}_j\le \zeta^{(\beta)}_j\le  2^{-j}+(\beta+1)2^{-j}
\end{equation}
Then the multiplier \eqref{expara} breaks up as a sum of four
analogous expressions and it suffices to show the desired bound for each of the summands.
We fix $\beta$ and suppress the dependency from the superscript for the rest of the proof. 

We also modify the sequence $ \xi_j $.
For each $j\in \N$, 
let $\xi_j'$ be the largest integer multiple of $2^{4-j}$ 
smaller than or equal to $\xi_j$.
Thanks to \eqref{xispacing}, the sequence $\xi'$ satisfies 
\begin{equation*}
0\le \xi'_j+2^{5-j}\le \xi'_{j-1}
\end{equation*}
Moreover, distance between $\xi_j'$ and $\xi_{j'}'$ for two indices
$j$ and $j'$ is comparable to the distance of $\xi_j$ and $\xi_{j'}$ with upper and lower factor at most
$2$. Hence the sequence $\xi_j'$ is $(d,b-2)$-lacunary.
Moreover, the intervals $[\xi_j',\xi_j)$ are pairwise disjoint.

\begin{lemma}
The multiplier $B_{m'}$ with
\[m'(\xi,\eta)=\sum_n 1_{[\xi_j',\xi_j)}(\xi) 1_{[\eta_j, \zeta_j)}(\eta)\]
satisfies  \eqref{holder} with constant depending only on $p_1,p_2$.
\end{lemma}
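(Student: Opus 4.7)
The plan is to imitate the dualization argument from the proof of Corollary \ref{expcor}. Write $I_j:=[\xi_j',\xi_j)$ and $J_j:=[\eta_j,\zeta_j)$, so that
\[
  B_{m'}(f,g) = \sum_j M_{I_j}f \cdot M_{J_j}g.
\]
Testing against an arbitrary Schwartz function $h \in L^{p_3}$ and using that the Fourier transform of a pointwise product is the convolution of Fourier transforms, I would insert a frequency localization $M_{-(I_j+J_j)}h$ in each summand to obtain
\[
\int h\cdot B_{m'}(f,g)\,dx
= \sum_j \int (M_{I_j}f)(M_{J_j}g)(M_{-(I_j+J_j)}h)\,dx.
\]

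Next, applying pointwise discrete H\"older with exponents $(p_1,p_2,p_3)$ and then H\"older in $x$, I would bound the right side by
\[
\Bigl\|\bigl(\textstyle\sum_j|M_{I_j}f|^{p_1}\bigr)^{1/p_1}\Bigr\|_{p_1}
\Bigl\|\bigl(\textstyle\sum_j|M_{J_j}g|^{p_2}\bigr)^{1/p_2}\Bigr\|_{p_2}
\Bigl\|\bigl(\textstyle\sum_j|M_{-(I_j+J_j)}h|^{p_3}\bigr)^{1/p_3}\Bigr\|_{p_3}.
\]
Since each $p_i>2$, the $\ell^{p_i}$ norms are dominated by the corresponding $\ell^2$ norms, so the proof reduces to applying the Rubio de Francia square function bound of Theorem \ref{rdf} to each of the three interval collections.

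Disjointness of $\{I_j\}$ is immediate from the sharpened spacing $\xi_{j-1}'\ge\xi_j'+2^{5-j}$ for the modified sequence, and disjointness of $\{J_j\}$ follows because the reduction \eqref{someeta} places each $J_j$ inside the dyadic-type intervals $[(1+\beta)2^{-j},(2+\beta)2^{-j})$, which are pairwise disjoint as $j$ varies. The main step is verifying that the Minkowski sums $\{I_j+J_j\}$ are also pairwise disjoint. From the containment $I_j+J_j\subset[\xi_j',\xi_j+2^{2-j})$, the original spacing $\xi_{j-1}\ge\xi_j+2^{6-j}$, and the lower bound $\eta_{j-1}\ge 2^{1-j}$, I expect a short arithmetic check to show that the right endpoint of $I_j+J_j$ falls strictly below the left endpoint of $I_{j-1}+J_{j-1}$, giving the required pairwise disjointness. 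Once this is in place, Theorem \ref{rdf} applied to all three collections produces the claimed bound. The only real obstacle is the geometric verification of disjointness of the Minkowski sums; notably, no multi-lacunarity hypothesis is used in this lemma, which is consistent with the constant depending only on $p_1$ and $p_2$.
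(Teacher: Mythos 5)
Your proof is correct, and the final conclusion matches the paper's, but your route is longer than the one the paper actually takes for this lemma. The paper does not dualize: it applies pointwise Cauchy--Schwarz on the sum in $j$ to bound
\[
  \Bigl|\sum_j (M_{I_j}f)(M_{J_j}g)\Bigr|\le\Bigl(\sum_j|M_{I_j}f|^2\Bigr)^{1/2}\Bigl(\sum_j|M_{J_j}g|^2\Bigr)^{1/2},
\]
then H\"older in $x$ with exponents $p_1,p_2$ (using $1/p_1+1/p_2=1/p_3'$), and finally Rubio de Francia (Theorem \ref{rdf}) to the two resulting square functions. That argument never touches the dualizing function $h$, so no disjointness of the Minkowski sums $I_j+J_j$ has to be verified. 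Your version, which imitates the dualization in Corollary \ref{expcor} --- insert $M_{-(I_j+J_j)}h$, three-exponent discrete H\"older, $\ell^{p_i}\hookrightarrow\ell^2$, and three applications of Theorem \ref{rdf} --- also works, and your geometric check is sound: from $\xi_j+2^{6-j}\le\xi_{j-1}$ and $\xi_{j-1}'>\xi_{j-1}-2^{5-j}$ one gets $\xi_{j-1}'>\xi_j+2^{5-j}>\xi_j+2^{2-j}\ge\sup(I_j+J_j)$, so the sums $I_j+J_j$ are pairwise disjoint. The trade-off is that your argument carries an extra lemma (disjointness of the Minkowski sums and a third square-function estimate) that the pointwise Cauchy--Schwarz approach simply bypasses; the paper reserves the dualization pattern for the places where it is genuinely needed (later in the proof of Theorem \ref{mlt} and in Corollary \ref{expcor}), where one wants to exploit frequency support of $h$ more seriously. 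Your observation that no multi-lacunarity enters is correct and agrees with the paper.
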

\begin{proof}
Recall that we denote by $M_I$ the linear Fourier multiplier for the interval $I$. We write 
 \begin{multline*}\|B_{m'}(f,g)\|_{p_3'}=\|\sum_{j} (M_{[\xi_j',\xi_j)}f)(M_{[\eta_j,\zeta_j)}g)\|_{p_3'}\\
 \le \|(\sum_{j} |M_{[\xi_j',\xi_j)}f|^2)^{1/2}\|_{p_1}
\|(\sum_{j}|M_{[\eta_j,\zeta_j)}g|)^{1/2}\|_{p_2} \\
\le C_{p_1,p_2}\|f\|_{p_1} \|g\|_{p_2},
\end{multline*}
where we applied Cauchy-Schwarz and H\"older to pass to the second line and
Theorem \ref{rdf} to bound the individual factors.
This proves the lemma.
\end{proof}

Because of the lemma, 
it suffices to estimate the multiplier \eqref{expara}
with $\xi_j$ replaced by $\xi_j'$. 
We shall do so and omit the prime in the forthcoming notation.
Let $X$ be the multi-lacunary image of the sequence $\xi_j$.
Let $Y$ be the collection of dyadic intervals $I$ 
such that $3I$ contains a point of $X$.
Let $Z$ be the collection of maximal dyadic intervals $I$ 
such that $3I$ does not contain any point of $X$. 
By maximality, the intervals of $Z$ are pairwise disjoint.
Let $Y_j$ be the collection of all dyadic intervals in $Y$ that have length $2^{4-j}$
and are contained in $[0,\xi_j)$. 
Let $Z_j$ be the collection of intervals in $Z$ which are contained in  
$[0,\xi_j)$ 
but not in any interval of $Y_j$.

\begin{lemma}\label{partlemma}
The intervals in $Y_j\cup Z_j$ have length at least $2^{4-j}$ and partition  $[0,\xi_j)$.
\end {lemma}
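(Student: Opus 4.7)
The plan is to exploit two structural features: first, because $\xi_j$ is a multiple of $2^{4-j}$ after the preceding reduction, the half-open interval $[0,\xi_j)$ is an exact union of the "elementary" dyadic intervals of length $2^{4-j}$; second, because $\xi_j\in X$, any dyadic interval $J$ with $3J\cap X=\emptyset$ must avoid $\xi_j$. I would split the claim into three parts: the length bound, pairwise disjointness, and the covering of $[0,\xi_j)$.

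For the length bound, intervals in $Y_j$ have length $2^{4-j}$ by definition. For $I\in Z_j$, I would assume $|I|<2^{4-j}$ and seek a contradiction: let $I^{\ast}$ be the unique dyadic ancestor of $I$ with $|I^{\ast}|=2^{4-j}$; since $I\subseteq[0,\xi_j)$ and $\xi_j$ is a multiple of $2^{4-j}$, $I^{\ast}$ must be one of the elementary intervals, hence lies in $[0,\xi_j)$; maximality of $I$ in $Z$ forces $3I^{\ast}\cap X\neq\emptyset$, so $I^{\ast}\in Y_j$, contradicting the requirement that $I$ not sit inside an element of $Y_j$.

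Disjointness is then largely routine: two elements of $Y_j$ are dyadic intervals of equal length, hence equal or disjoint; two elements of $Z$ cannot be strictly nested by their own maximality, so they are pairwise disjoint; and an element $J\in Z_j$ meeting some $I\in Y_j$ would have to be nested with $I$, with $J\subsetneq I$ ruled out by the length bound just established and $J\supseteq I$ ruled out because $3J\supseteq 3I$ would have to meet $X$, contradicting $J\in Z$.

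The main obstacle is the covering of $[0,\xi_j)$. For $x\in[0,\xi_j)$, I would take $I$ to be the elementary interval of length $2^{4-j}$ containing $x$; if $I\in Y$ then $I\in Y_j$ and we are done. Otherwise $3I\cap X=\emptyset$, and I would let $J$ be the maximal dyadic ancestor of $I$ with $3J\cap X=\emptyset$, so $J\in Z$. The delicate point is to confirm $J\subseteq[0,\xi_j)$: for the right endpoint, $\xi_j\in X$ gives $\xi_j\notin 3J\supseteq J$, while $J\supseteq I$ has its left endpoint strictly below $\xi_j$, so $J$ cannot contain $\xi_j$ and its right endpoint must be at most $\xi_j$; for the left endpoint, an easy parity induction shows that dyadic ancestors of a dyadic interval with non-negative left endpoint again have non-negative left endpoint. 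Finally, $|J|\geq|I|=2^{4-j}$ together with $J\in Z$ prevent $J$ from being contained in any element of $Y_j$, so $J\in Z_j$ and covers $x$.
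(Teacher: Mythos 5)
Your proof is correct and follows essentially the same three-part structure as the paper (length bound, disjointness, covering), with the same key observations about $\xi_j$ being a multiple of $2^{4-j}$ and $\xi_j\in X$ excluding $\xi_j$ from any $3J$ with $J\in Z$. The only difference is cosmetic: for the length bound you invoke maximality of $I$ in $Z$ directly to see $3I^*\cap X\neq\emptyset$, whereas the paper reaches the same contradiction by showing $I^*$ lies inside some $J'\in Z$ and then appealing to pairwise disjointness of $Z$.
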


\begin{proof}
The intervals in $Y_j$ have length $2^{4-j}$ by definition. 
Let $I$ be an interval in $Z_j$ of length at most  $2^{4-j}$.
Let $J$ be the dyadic interval of length $2^{4-j}$ containing $I$. 
Then $J$ is contained in $[0,\xi_j)$,
because $\xi_j$ is an integer multiple of $2^{4-j}$
and $ J $ contains $ I \subset [0,\xi_j) $.
The interval $J$ is not in $Y_j$, 
since $I$ is not contained in any interval of $Y_j$. 
Hence $J$ must be contained in an interval $J' \in Z$. 
As the intervals in $Z$ are pairwise disjoint, 
$I$ cannot be strictly contained in $J'$, 
and hence $I$ has to be equal to $J'$.
Consequently, $ |I| = 2^{4-j}$. 
This proves the statement about the length of the intervals.

We turn to the claim about partitioning.
The intervals in $Y_j$ are pairwise disjoint, 
because they are dyadic intervals of equal length.
The intervals in $Z_j$ are pairwise disjoint, 
because they are maximal.  
By construction, no interval of $Z_j$ can be contained in any interval of $Y_j$.
Conversely, no interval in $Y_j$ can be contained in any interval of $Z_j$.
Indeed, $ 3I $ with $ I \in Z_j$ contains no point of $X$,
but $ 3J $ for $ J \in Y_j $ does contain a point of $ X $.
Hence $ J \nsubseteq I $.
Hence the intervals in $Y_j\cup Z_j$ are all pairwise disjoint. 

To prove that the intervals form a cover,
let $\xi$ be any point in $[0,\xi_j)$. 
Let $I$ be the dyadic interval of length $2^{4-j}$ containing $\xi$. 
It is contained in $[0,\xi_j)$. 
If $I$ is in $Y_j$, then $\xi$ is covered by intervals in $Y_j\cup Z_j$. 
If it is not in $Y_j$, then it is contained in an interval $J$ in $Z$.
The interval $J$ does not contain $\xi_j$ by definition of $ Z $.
Hence it is contained in $[0,\xi_j)$.
It is not contained in any interval of $Y_j$, 
and hence it is in $Z_j$. 
Again, $\xi$ is covered by $Y_j\cup Z_j$. 
This proves the partition statement.
\end{proof}

Using Lemma \ref{partlemma}, 
we may split $B_m(f,g)$ as
\begin{equation}\label{YZterms}
\sum_{j} \sum_{I\in Z_j}(M_{I}f)(M_{(\eta_j,\zeta_j)}g)
+\sum_{j} \sum_{I\in Y_j}(M_{I}f)(M_{(\eta_j,\zeta_j)}g).
\end{equation}
It suffices to estimate the terms separately.
Recall that we assume each $ \xi_j $ to be an integer multiple of $ 2^{4-j} $
and each $ (\eta_j,\zeta_j) $ to actually be $ (\eta_j^{(\beta)},\zeta_j^{(\beta)}) $
as defined in \eqref{someeta}.

\subsection*{First term}
Regrouping the sum in the first term and pairing with a dualizing function, 
the quantity to be estimated becomes 
\begin{multline}\label{zdual}
\int \sum_{I\in Z} \sum_{j: I\in Z_j}(M_{I}f)(M_{(\eta_j,\zeta_j)}g)(x) h(x)\, dx \\
= \int \sum_{I\in Z} (M_{I}f)
\left[\sum_{j: I\in Z_j}(M_{(\eta_j,\zeta_j)}g)(x)\right] M_{-I-[0,2^{-2}|I|]}h(x)\, dx.
\end{multline}
Here we used that the Fourier support of the product of $M_{I}f$ and  $M_{(\eta_j,\zeta_j)}g$ is contained in
\[I+(\eta_j,\zeta_j)\subset I+[0, 2^{2-j})\subset I+[0,2^{-2}|I|),\]
the first inclusion following from \eqref{someeta} and the second one from Lemma \ref{partlemma}.
We may therefore apply the adjoint of the Fourier restriction to this interval
to the dualizing function $h$ without changing the value of the duality pairing.

The intervals $I+[0,2^{-2}|I|)$ have bounded overlap as $I$ runs through $Z$.
To see this, partition each such interval as disjoint union of $I$ and $J_I$, 
where $J_I$ is the dyadic interval to the right of $I$ and has length $2^{-2}|I|$.   
Then $3J_I$ is contained in $3I$ and thus does not contain any point of $X$. 
Hence $J_I$ is contained in an interval of $Z$ with which it shares the left endpoint.
As the intervals in $Z$ are pairwise disjoint and each of them can only contain one interval $J_I$ as above,
the intervals $J_I$ are pairwise disjoint.

Consider an interval $I$ of $Z$ and assume it has nonzero contribution to \eqref{zdual}. 
Then there are integers $ j_I < j^{I} $ such that 
$I$ is contained in $[0,\xi_j)$ precisely if $j < j^I$, 
and it is not contained in any interval of $Y_j$ precisely if $j>j_I$. 
Hence we can notice
 \begin{equation}\label{phicut}
 \sum_{j: I\in Z_j} M_{1_{(\eta_j,\zeta_j)}}g =
  \sum_{j_I<  j< j^I} M_{1_{(\eta_j,\zeta_j)}}g =\tilde{g}*(\phi_{j_I} - \phi_{j^I})
  \end{equation}
where 
\[\tilde{g}=\sum_{j\in \N} M_{1_{(\eta_j,\zeta_j)}}g, \quad \widehat{\phi}_{j}(x)= \widehat{\phi}(2^{j}x) \]
and $\phi$ is a Schwartz function 
whose Fourier transform is supported on $[\beta, \beta+3]$ 
and is constant one on $[\beta+1, \beta +2 ]$.
Here we have used \eqref{someeta}.

The expression in \eqref{phicut} is bounded pointwise by a constant times the
Hardy--Littlewood maximal function ${\mathcal M}\tilde{g}$ of $\tilde{g}$.
Hence we estimate \eqref{zdual} by
\begin{multline*}
 C \int (\sum_{I\in Z}  |M_{1_{I}}f(x)|^2)^{1/2}
({\mathcal M}\tilde{g})(x) (\sum _{I\in Z} |M_{-I-[0,2^{-2}|I|]}h(x)|^2)^{1/2}\, dx \\
\le \| (\sum_{I\in Z}  |M_{I}f|^2)^{1/2}\|_{p_1}
\| \mathcal{M} \tilde{g}\|_{p_2} \| (\sum _{I\in Z} |M_{-I-[0,2^{-2}|I|]}h|^2)^{1/2}\|_{p_3}.
\end{multline*}
With Rubio de Francia's square function inequality from
Theorem \ref{rdf}, using the bounded overlap of $I+[0,2^{-2}|I|]$,
we estimate the last display by
\[ C_{p_1,p_2}\|f\|_{p_1} \|\tilde{g}\|_{p_2}\|h\|_{p_3}\le C_{p_1,p_2}\|f\|_{p_1} \|{g}\|_{p_2}\|h\|_{p_3}\]
In the last inequality, we have also used Theorem \ref{hmt}. 
The assumption on the total variation is obviously satisfied 
as the multiplier only jumps a bounded number of times by one in each of the test intervals.
This completes the bound for the first term in  \eqref{YZterms}.

\subsection*{Second term}
We turn to the second term in \eqref{YZterms}.
We decompose $Y$  as the union
\[Y=\bigcup_i Y^{(i)}\]
where $Y^{(i)}$ contains the intervals $I$ of $Y$ such that $3I$ contains a point of $O^{(i)}$ but
no point of any $O^{(i')}$ with $i'<i$. 
As the parameter $i$ only ranges form $0$ to $d$, 
it suffices to consider the $Y^{(i)}$ separately and prove a bound for 
\begin{equation}\label{Yiterm}
\sum_{j} \sum_{I\in Y_j\cap Y^{(i)}}(M_{I}f)(M_{(\eta_j,\zeta_j)}g).
\end{equation}

Let $W^{(i)}$ be the maximal intervals in $Y^{(i)}$.
In the next lemma, 
we single out two facts that we will need later.
The proof is similar, but not identical, to the argument that proved an analogous statement for $ Z $.

\begin{lemma}
  \label{lemma:whitneyprop}
  Let $ I \in W^{(i)} $. 
  If there is $ J \in W^{(i)} $ with 
  \[
  J \cap (I+[0,2^{-2}|I|)) \ne \varnothing,
  \]
  then $ |J| \ge 2^{-2}|I|$.
  In particular,
  the intervals $ I+[0,2^{-2}|I|)$ with $I \in W^{(i)}$
  have bounded overlap.
\end{lemma}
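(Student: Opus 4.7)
The plan is to prove the size comparison by dyadic ascent and contradiction, and then derive the bounded overlap as a direct combinatorial consequence of that size comparison together with disjointness.

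For the size comparison, I would write $I = [a, a+L)$ with $L = |I|$ and note that the case $J = I$ is trivial. When $J \ne I$, maximality forces $I$ and $J$ to be disjoint, and the hypothesis $J \cap (I + [0, L/4)) \ne \varnothing$ pins $J$ to the right of $I$ with left endpoint in $[a+L, a+5L/4)$. Because $I$ is dyadic, $a$ is a multiple of $L$, so $[a+L, a+5L/4)$ is itself a dyadic interval of length $L/4$. Suppose for contradiction $|J| < L/4$; then $|J| \le L/8$, and by dyadic nesting both $J$ and its dyadic parent $J^{+}$ (of length $2|J| \le L/4$) sit inside $[a+L, a+5L/4)$. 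A direct computation yields $3J^{+} \subseteq [a + 3L/4,\, a + 3L/2) \subseteq 3I$, so $3J^{+}$ inherits from $3I$ the property of meeting no point of $X_{i-1} := O^{(0)} \cup \cdots \cup O^{(i-1)}$. On the other hand $3J^{+} \supseteq 3J$ still contains the point of $O^{(i)}$ that certifies $J \in Y^{(i)}$. Hence $J^{+}$ meets both defining conditions of $Y^{(i)}$, contradicting the maximality of $J$. I conclude $|J| \ge L/4$.

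For the overlap statement, I fix $x \in \R$ and enumerate the $I \in W^{(i)}$ with $x \in I + [0, |I|/4)$. At most one such $I$ can contain $x$ itself, by disjointness of $W^{(i)}$. For the rest, $x$ lies in the extra part $[a_I + |I|, a_I + 5|I|/4)$, equivalently the right endpoint $r_I$ lies strictly in $(x - |I|/4, x]$. If two such distinct intervals $I, J$ both had $x$ in their extra parts, I would order them so that $r_I < r_J$; disjointness then gives $r_J - r_I \ge |J|$, while the strict window forces $r_J - r_I < \max(|I|,|J|)/4$. Moreover the left endpoint of $J$ lands in the extra part of $I$, so applying the first part of the lemma yields $|J| \ge |I|/4$. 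A short comparison of these three inequalities yields a contradiction, so at most one interval has $x$ in its extra part, and the total overlap is at most two.

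I expect the main obstacle to be the dyadic ascent in Part 1: one has to identify the correct parent $J^{+}$, verify by explicit computation that its tripled version still fits inside $3I$, and then leverage the lacunarity-induced absence of $X_{i-1}$ from $3I$ to force the contradiction with maximality. The overlap statement is then essentially bookkeeping, though care is needed with the strict inequalities and with the case where the two intervals have very different sizes, which is precisely where the first part of the lemma is invoked.
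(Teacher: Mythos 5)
Your proof is correct and follows essentially the same approach as the paper: in the case of a putatively small $J$, you ascend to a larger dyadic interval (the parent $J^{+}$, where the paper instead uses the right-neighbor block $J_I$ directly), observe that its triple stays inside $3I$ yet still contains the $O^{(i)}$ point from $3J$, and derive a contradiction with the maximality of $J$ in $Y^{(i)}$. The explicit bookkeeping for the overlap count simply fills in a step the paper leaves implicit.
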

\begin{proof}
Take $I\in W^{(i)}$ and write
$
I+[0,2^{-2}|I|] = I\cup J_I
$
as a disjoint union.
Assume $J_I$ intersects another interval $J\in W^{(i)}$. 
It suffices to show that $J_I$ is contained in $J$.
Suppose $J$ is contained in $J_I$. 
Then $3J_I$ contains $3J$ and thus a point from $O^{(i)}$. 
On the other hand, 
$3J_I$ is contained in $3I$ 
and thus does not contain any point from any $O^{(i')}$ with $i'<i$. 
Hence $J_I$ is contained in an interval
of $W^{(i)}$. As the intervals of $W^{(i)}$ are pairwise disjoint, this interval must
be $J$ and hence $J$ is equal to $J_I$.
In particular, 
a right neighbor of $ I \in W^{(i)} $ has at least one quarter of the length of $ I $.
\end{proof}

To estimate \eqref{Yiterm}, 
we sort the intervals by their containment in maximal intervals, 
pair with a dualizing function, 
and realize the restriction of the Fourier support of the dualizing function with a multiplier as
\begin{equation}\label{bminus4}
\int \sum_{j} \sum_{J\in W^{(i)}} \sum_{ \substack{I\subset J \\ I\in Y_j\cap Y^{(i)} }}M_{I}f(x)M_{[\eta_j,\zeta_j)}g(x) M_{-J-[0,2^{-2}|J|)}(h)(x)dx.
\end{equation}
We break the innermost sum up by considering separately the cases $|I|> 2^{-b-4}|J|$ 
and $|I|\le  2^{-b-4}|J|$.

The sum with $|I|> 2^{-b-4}|J|$ is estimated by
\begin{multline*}
  \|(\sum_j \sum_{J\in W^{(i)}}(\sum_{\substack{I\subset J \\  I\in Y_j\cap Y^{(i)} \\  |I|> 2^{-b-4}|J|}} M_{I}f)^2)^{1/2}\|_{p_1} \\ 
  \times  \|(\sum_j |M_{(\eta_j,\zeta_j)}g|^2)^{1/2}\|_{p_2}\|(\sum_{J\in W^{(i)}} |M_{J+2^{-1}|J|}h|^2)^{1/2}\|_{p_3} .
\end{multline*}
The three factors are estimated by Theorem \ref{rdf}. 
It is clear that the intervals $(\eta_j,\zeta_j)$ are pairwise disjoint.
By Lemma \ref{lemma:whitneyprop}, $J+2^{-1}|J|$ have bounded overlap. 
For the first factor,
we use the disjointness of the various intervals in $Y_j$ to write it as
\[\|( \sum_{J\in W^{(i)}} \sum_{k=0}^{b+3} \sum_{\substack{I\subset J \\ I\in Y^{(i)} \\   |I|= 2^{-k}|J|}} |M_{1_{I}}f|^2)^{1/2}\|_{p_1} \]
As the various intervals $J$ are disjoint,
we obtain the estimate
\[C_{p_1}(b+4)\|f\|_{p_1}\]
by Theorem \ref{rdf}.
This completes the bound of the sum over $|I|> 2^{-b-4}|J|$ in \eqref{bminus4}.

We turn to the sum in \eqref{bminus4} over $|I|\le  2^{-b-4}|J|$.
Let $V^{(i)}$ be the collection of dyadic intervals $I$ 
such that there exists $J\in W^{(i)}$ with $I\subset J$ and $2^{b+4}|I|=|J|$
and at least one $ I' \subset I $ with $ I' \in Y_j\cap Y^{(i)}$.
Given a pair of neighboring intervals in $W^{(i)}$,
the right interval is at least one quarter as wide as left interval by Lemma \ref{lemma:whitneyprop}. 
The same property is inherited by the refined family $V^{(i)}$.
Accordingly, it suffices to estimate 
\begin{equation}\label{nestedmult}
\int \sum_{j} \sum_{J\in V^{(i)}} \sum_{\substack{I \subset J \\ I \in Y_j\cap Y^{(i)}} }M_{I}f(x)M_{[\eta_j,\zeta_j)}g(x) M_{-J-[0,2^{-2}|J|)}h(x)dx,
\end{equation}
where the Fourier support of $ h $ is now given in terms of an interval in $ V^{(i)} $.

For each $J\in V^{(i)}$, 
there is exactly one point of $O^{(i)}$ contained in $7J$.
Existence of at least one such a point follows by our requirement that $ J $ contains an interval from $ Y_j \cap Y^{(i)} $.
To show that there cannot be more than one,
suppose there were two such points $\xi,\xi'$. 
By the multi-lacunarity, 
there would then exist a point $\xi''$ of $O^{(i')}$ with $i'<i$ 
and
\[
|\xi''  - \xi| 
\le 2^b|\xi-\xi'|
\le 7 \cdot 2^{b} |J| \le \frac{7}{16} |I'| \] 
where $ I' $ is the interval in $ W^{(i)} $ with $ I' \supset J $.
This would imply $\xi'' \in 3I' $,
which contradicts the fact $ I' \in Y^{(i)}$.
Hence there is at most one point $ \xi_J \in 3J \cap O^{(i)} $,
and this point is the unique point from $ O^{(i)} $
withing the whole $ 7J $.

Let $\phi$ be a Schwartz function whose Fourier transform is 
supported on $ [-2^{-8}, 2^{-8} ]$ and
equal to one in $ [-2^{-9}, 2^{-9} ]$.
Write $\widehat{\phi}_{\xi,j}(\eta)=\widehat{\phi}(2^{j}(\eta-\xi))$.
We compare \eqref{nestedmult} with
\begin{equation}\label{nestedmodel}
\int \sum_{j} \sum_{J\in V^{(i)}} (\phi_{\xi_J,j}*f)(x)M_{(\eta_j,\zeta_j)}g(x) M_{-J-[0,2^{-2}|J|)}(h)(x)dx,
\end{equation}
where we have replaced $M_I$ by a convolution with $\phi_{\xi_J,j}$,
and the sum over $ I $ has been removed.
We first prove bounds on \eqref{nestedmodel},
and then we show how to pass from the original expression to \eqref{nestedmodel}.

We note that we may now further restrict the Fourier transform of $h$ to the interval
\[I_{J,j}=-[\xi_J-2^{-8-j}, \xi_J +2^{-8-j}]- [\eta_j,\zeta_j) . \]
Because $ \xi_J $ is a point unique to $ 7J $, 
because the intervals $ J $ are disjoint,
and because the sequences $ [\eta_j,\zeta_j) $ are subject to the condition \eqref{etaspacing},
we see that the intervals $ I_{J,j} $ form a disjoint family as $ J $ varies 
and disjoint and lacunary family as $ j $ varies.
We thus estimate \eqref{nestedmodel} by
\begin{multline*}
  \| (\sum_{J\in V^{(i)}}  \sup_{j} |\phi_{\xi_J,j}*(M_Jf)|^2)^{1/2}\|_{p_1} 
\|(\sum_j |M_{(\eta_j,\zeta_j)}g|^2)^{1/2}\|_{p_2} \\
\times
\|(\sum_{J\in V^{(i)}}\sum_j | M_{I_{J,j}}h|^2)^{1/2}\|_{p_3}
\end{multline*}
The last two factors we estimate with Theorem \ref{rdf}. 
In the first factor, 
we bound the convolution product by the Hardy--Littlewood maximal function 
and apply the Fefferman--Stein maximal inequality as well as Theorem \ref{rdf}.
This concludes the proof of the bound for \eqref{nestedmodel}.

It remains to bound the difference of \eqref{nestedmult} and \eqref{nestedmodel}.
Define
\[w_{j,J}  =  \left(\sum_{\substack{I \subset J \\ I \in Y_j\cap Y^{(i)}} } 1_I \right) - \widehat{\phi_{\xi_J,j}} 1_J
 .\]
Let $ \tilde{\psi}_{j,J} $ be a Schwartz function equal to one in $ [\xi_J - 2^{5-j}, \xi_J + 2^{5-j}] $ and zero outside $ [\xi_J - 2^{6-j}, \xi_J + 2^{6-j}] $.
Defining $ \psi_{j,J} = \tilde{\psi}_{j,J}  -  \widehat{\phi_{\xi_J,j}} $,
we then see that $ \psi_{j,J} $ is a Schwartz function
supported in $ [\xi_J - 2^{6-j}, \xi_J + 2^{6-j} ] $, 
vanishing in $ [-2^{-9-j}, 2^{-9-j} ]$
and satisfying 
\begin{equation}
\label{eq:psi_nested}
  w_{j,J}  = 1_J \psi_{j,J} \left(\sum_{\substack{I \subset J \\ I \in Y_j\cap Y^{(i)}} } 1_{I \setminus (\xi_J -2^{-9-j},\xi_J + 2^{-9-j} ) } \right) .
\end{equation}
Indeed, $ \xi_{J} $ is the unique point of $ \bigcup_{i' \le i} O^{(i)} $ in $ 7J $.
If there is $ I \in Y_j\cap Y^{(i)}$ with $ I \subset J $ and $ \partial I \setminus \partial J \ni \xi_J $, 
then $ I' $ with $ |I'| = |I| $ and $ I' \cap I = \{\xi_J\} $ is also in $ Y_j\cap Y^{(i)} $
and contained in $ J $.
Hence
\[
   \xi_J \notin \partial \left(\bigcup_{\substack{I \subset J \\ I \in Y_j\cap Y^{(i)}} } I\right) \setminus \partial J.
\]
On the other hand, the union above is contained in $  [\xi_J - 2^{5-j}, \xi_J + 2^{5-j}] $,
and so the equation \eqref{eq:psi_nested} is justified.

As we know the bounds for \eqref{nestedmodel},
it suffices to bound
\begin{multline*}
  \| (\sum_{J\in V^{(i)}}  \sum_{j}  | \sum_{\substack{I \subset J \\ I \in Y_j\cap Y^{(i)}} } M_I M_Jf- \phi_{\xi_J,j}*(M_Jf)|^2)^{1/2}\|_{p_1} \\ 
\times
\|(\sum_j |M_{(\eta_j,\zeta_j)}g|^2)^{1/2}\|_{p_2} \|(\sum_{J\in V^{(i)}}  | M_{(-J-[0,2^{-2}|J|))}h|^2)^{1/2}\|_{p_3}.
\end{multline*}
Here the last two factors are readily estimated by Theorem \ref{rdf}.
We focus on the first factor.
By the observation \eqref{eq:psi_nested},
we can bound the multiplier operator associated with frequency symbol $ \psi_{j,J} $
by the Hardy--Littlewood maximal function and apply the Fefferman--Stein inequality to control the first factor by 
\[
  \| (\sum_{J\in V^{(i)}}  \sum_{j}  |\sum_{\substack{I \subset J \\ I \in Y_j\cap Y^{(i)}} } M_{ I \setminus ( \xi_J - 2^{-9-j}, \xi_J + 2^{-9-j}  ) }  M_Jf|^2)^{1/2}\|_{p_1} .
\]
However, the intervals above have bounded overlap,
and hence we are in a position to apply Theorem \ref{rdf} for one last time.
This completes the proof of Theorem \ref{mlt}.
\bibliography{ref}

\bibliographystyle{abbrv}

\end{document}